\newtheorem{theorem}{Theorem}[section]
\newtheorem{proposition}[theorem]{Proposition}
\newtheorem{conjecture}[theorem]{Conjecture}
\theoremstyle{remark}
\newtheorem{question}{Question}
\theoremstyle{definition}
\newtheorem{definition}[theorem]{Definition}
\newtheorem{remark}[theorem]{Remark}
\numberwithin{equation}{section}
\numberwithin{figure}{section}
\numberwithin{table}{section}
\begin{document}

 \title{Lecture notes on generalized Monge-Amp\`ere equations and subvarieties}

 \author{Gao Chen} \thanks{}
  \address{Institute of Geometry and Physics, University of Science and Technology of China, Shanghai, China, 201315}
 \email{chengao1@ustc.edu.cn}

\begin{abstract}
These are the lecture notes for the Morningside Center of Mathematics Geometry Summer School on August 15-20, 2022. These lectures sketch the results by Yau, Demailly-Paun, the author, and Datar-Pingali about generalized Monge-Amp\`ere equations and subvarieties and aim to use these results to study the Hodge conjecture.
\end{abstract}

\maketitle

\setcounter{tocdepth}{1}
\tableofcontents

\section{Introduction}

We begin with Calabi's conjecture which was solved by Yau \cite{Yau}.
\begin{theorem}[\cite{Yau}]
Let $(M,J,\omega_0,g_0)$ be a K\"ahler manifold and $f\in C^{\infty}(M)$ be a function such that
\begin{equation}
\int_M e^f \omega_0^n = \int_M \omega_0^n.
\end{equation}
Then there exists $\varphi\in C^{\infty}(M)$ such that
\begin{equation}
\omega_\varphi = \omega_0 + i\partial\bar\partial \varphi>0,
\end{equation}
and
\begin{equation}
\omega_\varphi^n = e^f \omega_0^n.
\end{equation}
\end{theorem}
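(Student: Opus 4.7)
The plan is to apply the continuity method. I would introduce, for $t \in [0,1]$, the family of equations $\omega_{\varphi_t}^n = e^{tf + c_t}\omega_0^n$ subject to $\omega_0 + i\partial\bar\partial\varphi_t > 0$, where $c_t$ is the unique constant making $\int_M e^{tf+c_t}\omega_0^n = \int_M \omega_0^n$. At $t=0$ the function $\varphi_0 = 0$ (with $c_0 = 0$) is a solution, and at $t=1$ the normalization hypothesis forces $c_1 = 0$, so a solution there proves the theorem. Let $T\subset[0,1]$ denote the set of $t$ admitting a solution $\varphi_t \in C^{k,\alpha}(M)$, normalized by $\int_M \varphi_t\,\omega_0^n = 0$, for fixed $k\geq 2$ and $\alpha\in(0,1)$. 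I would show $T$ is non-empty, open, and closed, whence $T=[0,1]$, and then bootstrap to smoothness by elliptic regularity.

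For openness, I would linearize the Monge-Amp\`ere operator at a solution $\varphi_t$: its Gateaux derivative in direction $u$ is $(\Delta_{\omega_{\varphi_t}}u)\,\omega_{\varphi_t}^n$, a self-adjoint elliptic operator whose kernel consists of constants. Working on the subspace of H\"older functions with prescribed mean, and letting the parameter $c$ float to absorb constants, the implicit function theorem in Banach spaces produces solutions for $s$ in a neighborhood of $t$.

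The heart of the argument is closedness, which requires a priori estimates on $\varphi_t$ independent of $t$. I would follow Yau's order. First, a $C^0$ bound $\|\varphi_t\|_{C^0}\leq C$ in terms of $\|f\|_{C^0}$ and the background geometry, obtained by Moser iteration using the Sobolev inequality on $(M,\omega_0)$ together with the normalization. Second, a $C^2$ estimate via Yau's maximum principle argument applied to $\log\operatorname{tr}_{\omega_0}\omega_{\varphi_t} - A\varphi_t$, with $A$ large depending on a lower bound for the bisectional curvature of $\omega_0$; this bounds $\operatorname{tr}_{\omega_0}\omega_{\varphi_t}$ from above, and the equation then controls $\operatorname{tr}_{\omega_{\varphi_t}}\omega_0$. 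Once $\omega_{\varphi_t}$ and $\omega_0$ are uniformly equivalent, the complex Monge-Amp\`ere equation is uniformly elliptic; since $A\mapsto \det(A)^{1/n}$ is concave on positive Hermitian matrices, the Evans--Krylov theorem yields a uniform $C^{2,\alpha}$ bound, and Schauder bootstrap gives $C^{k,\alpha}$ bounds for all $k$.

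The principal obstacle is precisely this pair of a priori estimates, which is where the real content of the theorem lies. The $C^0$ estimate is subtle because the equation has no coercive zeroth-order term, so one must extract decay from the Monge-Amp\`ere structure itself through a non-standard iteration scheme. The $C^2$ estimate is equally delicate: the precise choice of test function $\log\operatorname{tr}_{\omega_0}\omega_{\varphi_t} - A\varphi_t$ and the careful bookkeeping of curvature terms in the maximum principle computation are what make the argument close, and historically these are the steps that held up the conjecture for decades.
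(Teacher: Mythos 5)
Your proposal is correct and takes essentially the same route as the paper: the continuity method along $\omega_{\varphi_t}^n = e^{tf}\omega_0^n$, openness via the linearized (Laplacian-type) operator, a maximum-principle $C^2$ estimate with the test function $\log\operatorname{tr}_{\omega_0}\omega_{\varphi_t} - A\varphi_t$ (the paper writes this equivalently as $e^{-A\varphi}(n+\Delta\varphi)$), and Evans--Krylov plus Schauder bootstrapping. The one organizational difference worth noting: you decouple the $C^0$ estimate (via Moser iteration from the Sobolev inequality, which is Yau's original scheme) and then run the $C^2$ estimate afterward, whereas the paper's sketch first derives the conditional bound $n+\Delta\varphi \le Ce^{A(\varphi - \inf\varphi)}$ and then feeds it, together with integral inequalities for $\int_M e^{-N\varphi}$ and a contradiction argument in $W^{1,2}$, back into obtaining $|\inf\varphi|\le C$. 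Both orderings are standard and correct; your version is arguably the more modular one.
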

\begin{remark}
Conversely, if there exists $\varphi$ such that
\begin{equation}
\omega_\varphi^n = e^f \omega_0^n,
\end{equation}
then
\begin{equation}
\int_M e^f \omega_0^n = \int_M \omega_\varphi^n = \int_M \omega_0^n.
\end{equation}
\end{remark}
\begin{remark}
When $c_1(M)=0$, we can choose $f$ such that $\omega_\varphi$ induces a K\"ahler-Ricci-flat metric. In honor of Calabi and Yau, people call K\"ahler-Ricci-flat metrics as Calabi-Yau metrics.
\end{remark}

This short course aims to generalize this result and use the generalization to study subvarieties.

If $M$ is K\"ahler, then a subset $V\subset M$ is called an analytic subvariety if, for any $x\in V$, there exists a neighborhood $U$ such that $x\in U\subset M$ and 
\begin{equation}
V\cap U = \{f_1=f_2=...=f_N=0\}
\end{equation}
for holomorphic functions $f_i$.
\begin{theorem}[\cite{Chow}]
If $M$ is projective, then any analytic subvariety is algebraic. In other words, $U$ can be chosen as a Zariski open set, and $f_i$ are polynomials.
\end{theorem}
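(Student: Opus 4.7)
The plan is to reduce to the case $M = \mathbb{CP}^N$ and then show that any analytic subvariety of $\mathbb{CP}^N$ is cut out by homogeneous polynomials. Fixing an embedding $M \hookrightarrow \mathbb{CP}^N$, $M$ itself is a projective (hence both algebraic and analytic) subvariety, so an analytic subvariety $V \subset M$ is also analytic in $\mathbb{CP}^N$, and conversely any algebraic subvariety of $\mathbb{CP}^N$ contained in $M$ is algebraic in $M$ (by restricting defining polynomials to affine charts of $M$). The problem thus reduces immediately.

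The first step will be to form the affine cone $\tilde V := \pi^{-1}(V) \cup \{0\} \subset \mathbb{C}^{N+1}$, where $\pi \colon \mathbb{C}^{N+1}\setminus\{0\} \to \mathbb{CP}^N$ is the standard projection. Away from the origin $\tilde V$ is manifestly analytic since $\pi$ is a holomorphic submersion, so the task is to extend $\tilde V$ across $0$. For this I would invoke the Remmert--Stein extension theorem: the origin is $0$-dimensional while $\pi^{-1}(V)$ has dimension $\ge 1$, so the closure of $\pi^{-1}(V)$ in $\mathbb{C}^{N+1}$ is analytic.

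The heart of the argument is a local-to-global step at the origin. Let $I \subset \mathcal{O}_{\mathbb{C}^{N+1},0}$ be the ideal of germs vanishing on $\tilde V$. For $f \in I$ I would expand $f = \sum_{k\ge 0} f_k$ into its homogeneous components and exploit the cone symmetry: for $v \in \tilde V$ near $0$ and $\lambda\in\mathbb{C}$ of small modulus, $\lambda v\in \tilde V$, so $f(\lambda v) = \sum_k \lambda^k f_k(v)$ vanishes identically in $\lambda$, forcing $f_k(v)=0$ and hence $f_k \in I$. Using noetherianity of $\mathcal{O}_{\mathbb{C}^{N+1},0}$ to pick finitely many generators of $I$ and replacing each by its homogeneous components then yields homogeneous polynomial generators $g_1,\ldots,g_m$ of $I$.

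To finish, I would set $W := \{g_1 = \cdots = g_m = 0\} \subset \mathbb{C}^{N+1}$, an algebraic cone agreeing with $\tilde V$ as a germ at $0$. The cone property propagates this equality to all of $\mathbb{C}^{N+1}$: for any $w \in W$ and any $\lambda \in \mathbb{C}^{*}$ of sufficiently small modulus, $\lambda w \in W \cap \tilde V$ by the germ identity, and then $w = \lambda^{-1}(\lambda w) \in \tilde V$ by the cone invariance of $\tilde V$; the reverse inclusion is symmetric. Descending back to $\mathbb{CP}^N$ then exhibits $V$ as the zero locus of the $g_i$ and hence as algebraic. I expect the main technical hurdle to be the Remmert--Stein extension across the origin, where the global analytic structure of $\tilde V$ is first established; the subsequent homogeneous-generator argument is essentially formal given cone invariance and noetherianity.
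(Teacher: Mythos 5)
The paper records Chow's theorem as background and simply cites the reference, so there is no in-paper proof to compare against; your argument has to stand on its own. What you outline is the classical proof: reduce to $\mathbb{CP}^N$, pass to the affine cone $\tilde V=\pi^{-1}(V)\cup\{0\}\subset\mathbb{C}^{N+1}$, invoke Remmert--Stein to extend this analytic set across the origin (the exceptional set is $0$-dimensional while the fibres of $\pi$ force $\dim\pi^{-1}(V)\ge 1$, so the dimension hypothesis of Remmert--Stein is met on every irreducible component), and then exploit cone invariance to show the ideal $I\subset\mathcal{O}_{\mathbb{C}^{N+1},0}$ is closed under taking homogeneous components. That part, and the final globalization by cone invariance, are correctly placed and sound.

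The step you state too loosely is the passage from Noetherianity to finitely many homogeneous polynomial generators. Taking homogeneous components of finitely many analytic generators $h_1,\dots,h_r\in I$ does not yield a finite list: each $h_j$ has infinitely many homogeneous components, and it is not immediate that the ideal of $\mathcal{O}_{\mathbb{C}^{N+1},0}$ which they generate equals $I$ (that needs a Krull-intersection argument, not merely Noetherianity). A clean route is one you nearly say: let $J\subset\mathbb{C}[z_0,\dots,z_N]$ be the ideal generated by all homogeneous elements of $I$; by Hilbert's basis theorem $J$ is generated by finitely many homogeneous $g_1,\dots,g_m$. For the zero locus you never need $J\mathcal{O}_{\mathbb{C}^{N+1},0}=I$: it suffices that for any $w$ near $0$ with $w\notin\tilde V$ there is $f\in I$ with $f(w)\neq 0$, hence some homogeneous component $f_k$ with $f_k(w)\neq 0$, hence $w\notin V(J)$. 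Combined with the easy inclusion $\tilde V\subset V(J)$ this gives $V(g_1,\dots,g_m)=\tilde V$ near $0$, after which cone invariance of both sides gives global equality exactly as you argue. Contrary to your expectation, this homogeneous-ideal bookkeeping carries more hidden subtlety than the Remmert--Stein extension, whose hypotheses here are easy to verify.
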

\begin{theorem}[Sard]
Singular points on any analytic subvariety $V$ has measure zero, where $p$ is smooth if $T_p V=\mathbb{R}^{dim V}$ and singular if $p$ is not smooth.
\end{theorem}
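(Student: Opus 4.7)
The plan is to work locally around an arbitrary point $x\in V$, reduce to irreducible components, and exhibit the singular locus as a proper analytic subvariety of strictly smaller complex dimension. Then a straightforward induction on dimension closes the measure-zero estimate. First I would pass to a neighborhood $U$ in which $V\cap U=\{f_1=\cdots=f_N=0\}$, and decompose the germ of $V$ at $x$ into finitely many irreducible components $V_1,\ldots,V_k$ (using Noetherianity of the local ring of germs of holomorphic functions). Since a finite union of measure-zero sets is measure-zero, it suffices to handle a single irreducible $V_i$.

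For an irreducible local analytic set of pure complex dimension $d$ inside an ambient domain of complex dimension $n$, a point $p$ is smooth precisely when the Jacobian matrix $(\partial f_j/\partial z_l)$ attains its maximal possible rank $n-d$ at $p$. Failure of this rank condition is cut out by the simultaneous vanishing of all $(n-d)\times(n-d)$ minors of the Jacobian. These minors are themselves holomorphic, so the singular locus $V_i^{\mathrm{sing}}$ is an analytic subvariety of $U$ contained in $V_i$. The classical local parametrization theorem, built on Weierstrass preparation, realizes $V_i$ as a finite branched cover of a polydisk in $\mathbb{C}^d$ and shows that the smooth locus is open and dense in $V_i$. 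Hence $V_i^{\mathrm{sing}}\subsetneq V_i$ is a \emph{proper} analytic subset, which forces $\dim_{\mathbb{C}} V_i^{\mathrm{sing}}<d$.

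Now the measure-zero conclusion follows by induction on $d$. When $d=0$, $V_i$ is an isolated point and has no singular locus to speak of. Inductively, $V_i^{\mathrm{sing}}$ is an analytic subvariety of complex dimension at most $d-1$, so its real Hausdorff dimension is at most $2(d-1)<2d$, which gives measure zero inside $V_i$ (itself of real dimension $2d$ on its smooth part). A countable cover of $M$ by such chart neighborhoods, together with countable additivity, then promotes the local statement to the global one.

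The main obstacle is the structural input used in the second paragraph: namely, that an irreducible analytic germ is generically smooth, so that the singular set is a \emph{proper} analytic subset rather than all of $V_i$. This is the content of the local parametrization theorem for analytic sets and requires the Weierstrass preparation and division theorems together with some commutative algebra. Once this foundational fact is granted, the remainder of the argument — minors of Jacobians, dimension induction, countable union — is essentially bookkeeping.
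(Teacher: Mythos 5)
The paper simply states this result without proof, attributing it informally to Sard, so there is nothing in the source to compare against; your structural argument from complex-analytic geometry is the standard way to establish it.

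Your outline is essentially correct, but there is one small gap in the reduction to irreducible components. You write that since a finite union of measure-zero sets is measure-zero, ``it suffices to handle a single irreducible $V_i$.'' This is not quite the right decomposition: the singular locus of the germ of $V$ at $x$ is
$\bigcup_i \mathrm{Sing}(V_i)\ \cup\ \bigcup_{i\ne j}\bigl(V_i\cap V_j\bigr)$,
and a point lying on two distinct components is singular for $V$ even if it is a smooth point of each $V_i$ separately. Your Jacobian-minor argument only addresses the first union. The fix is immediate — for $i\ne j$ the intersection $V_i\cap V_j$ is a proper analytic subset of the irreducible $V_i$, hence of complex dimension at most $\dim V_i - 1$, and so has measure zero by the same dimension count — but it should be said explicitly.

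Two smaller points worth tightening. First, the $f_1,\ldots,f_N$ coming from the definition of analytic subvariety need not generate the ideal of $V_i$ (e.g.\ $\{z_1^2=0\}$ in $\mathbb{C}^2$, where every $1\times 1$ minor of the Jacobian vanishes identically); your appeal to Noetherianity of the local ring $\mathcal{O}_{\mathbb{C}^n,x}$ implicitly fixes this by allowing one to pass to generators of the prime ideal of $V_i$, but that replacement should be stated, since otherwise the Jacobian criterion for smoothness is false as written. Second, the statement in the paper uses the real tangent-space formulation $T_pV=\mathbb{R}^{\dim V}$, while you work with complex dimension and complex rank; these agree for complex-analytic sets, but since the equivalence is what makes the paper's loosely worded definition precise, a sentence acknowledging the translation would make the proof self-contained. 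With these additions the proof is complete, given the local parametrization theorem as foundational input — which is the right place to put the weight.
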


\begin{theorem}[\cite{Lelong}]
Any analytic subvariety $V$ defines a closed positive current.
\end{theorem}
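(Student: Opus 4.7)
The plan is to fix a point of $M$ and work locally, decomposing $V$ into irreducible components so that we may assume $V$ has pure complex dimension $k$. By the Sard-type theorem quoted above, the singular locus $V_{\mathrm{sing}}$ has measure zero in $V$, and the regular part $V_{\mathrm{reg}}$ is a complex submanifold of $M\setminus V_{\mathrm{sing}}$ of pure dimension $k$. I would define the candidate current $[V]$ of bidegree $(n-k,n-k)$ by
\begin{equation}
\langle [V],\alpha\rangle \;=\; \int_{V_{\mathrm{reg}}} \iota^*\alpha
\end{equation}
for each smooth, compactly supported $(k,k)$-form $\alpha$ on $M$, where $\iota:V_{\mathrm{reg}}\hookrightarrow M$ is the inclusion. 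Three things then need to be checked: well-definedness (local finiteness of the integral), positivity, and $d$-closedness.

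For local finiteness, I would prove the Lelong volume estimate: on a small coordinate ball, project $V$ to a generic $k$-dimensional complex linear subspace. After shrinking, this projection is a proper finite branched cover of its image, and Wirtinger's identity $\mathrm{vol}_{2k}(V_{\mathrm{reg}}\cap K)=\frac{1}{k!}\int_{V_{\mathrm{reg}}\cap K}\omega_{\mathrm{std}}^k$ turns the volume bound into a computation on the base, giving a uniform local bound. Positivity is then immediate, because at any smooth point of $V$ the pullback of a positive $(k,k)$-form is a nonnegative top-degree form on a complex submanifold; hence $\langle [V],\alpha\rangle\geq 0$ whenever $\alpha$ is a (strongly) positive test form.

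For closedness, I would pick a sequence of smooth cutoff functions $\chi_\varepsilon:M\to[0,1]$ that vanish on an $\varepsilon$-neighborhood of $V_{\mathrm{sing}}$, equal $1$ outside a $2\varepsilon$-neighborhood, and satisfy a gradient bound $|d\chi_\varepsilon|\leq C/\varepsilon$. Then for a test $(k-1,k)$- or $(k,k-1)$-form $\beta$, apply Stokes on the smooth manifold $V_{\mathrm{reg}}$:
\begin{equation}
\int_{V_{\mathrm{reg}}}\chi_\varepsilon\,d\beta \;=\; -\int_{V_{\mathrm{reg}}} d\chi_\varepsilon\wedge\beta,
\end{equation}
since $\chi_\varepsilon\beta$ has compact support away from $V_{\mathrm{sing}}$. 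As $\varepsilon\to 0$, the left-hand side tends to $\langle[V],d\beta\rangle=\langle d[V],\beta\rangle$ by dominated convergence and the finite-volume bound above. The right-hand side is controlled by $(C/\varepsilon)\cdot\mathrm{vol}_{2k}(V_{\mathrm{reg}}\cap\{\mathrm{dist}(\cdot,V_{\mathrm{sing}})<2\varepsilon\})$, which tends to $0$ because $V_{\mathrm{sing}}$ is itself a lower-dimensional analytic subvariety (of complex dimension $\leq k-1$, hence real Hausdorff dimension $\leq 2k-2$), so the Lelong estimate applied to $V_{\mathrm{sing}}$ shows that the $2k$-volume of a tubular $\varepsilon$-neighborhood is $O(\varepsilon^2)$.

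The main obstacle is the volume estimate near $V_{\mathrm{sing}}$: getting $\mathrm{vol}_{2k}(V_{\mathrm{reg}}\cap U_\varepsilon)=O(\varepsilon^2)$ for a $2\varepsilon$-tube $U_\varepsilon$ around $V_{\mathrm{sing}}$ is what makes the cutoff argument succeed and is precisely the content of Lelong's theorem on the $2k$-dimensional Hausdorff measure of analytic sets. Everything else — positivity, local finiteness on compacta, and the Stokes manipulation — is routine once this estimate and the structure of $V_{\mathrm{sing}}$ as an analytic subset of strictly smaller dimension are in hand.
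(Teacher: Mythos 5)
Your proposal is correct, but there is nothing in the paper to compare it against: the result is simply attributed to \cite{Lelong}, and the surrounding text only supplies the definitions (currents, positivity, $d$ on currents, and the pairing $\int_M [V]\wedge\chi=\int_{V^{\mathrm{smooth}}}\chi$) needed to parse the statement, then moves on. What you have reconstructed is the classical proof, which is the one in Lelong's work and in \cite{Demailly}: reduce to an irreducible component of pure dimension $k$; define $[V]$ by integration on $V_{\mathrm{reg}}$; get local finiteness by projecting $V$ to a generic $k$-plane, where the branched-cover structure and Wirtinger's identity yield the locally uniform mass bound $\mathrm{vol}_{2k}(V\cap B(x,r))\le Cr^{2k}$; observe that positivity is automatic because the restriction of a strongly positive $(k,k)$-form to a $k$-dimensional complex submanifold is a nonnegative volume form; and prove closedness, the one nontrivial point, by the cutoff argument in which the $O(\varepsilon^2)$ tube estimate near $V_{\mathrm{sing}}$ absorbs the $O(1/\varepsilon)$ from $|d\chi_\varepsilon|$. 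All the essential ideas are present. Two details worth spelling out: the $O(\varepsilon^2)$ tube bound uses \emph{both} the local mass bound for $V$ (giving $\lesssim\varepsilon^{2k}$ per ball of radius $2\varepsilon$) \emph{and} the mass bound for the lower-dimensional analytic set $V_{\mathrm{sing}}$ (giving $\lesssim\varepsilon^{-(2k-2)}$ balls in a covering), so the argument is naturally an induction on dimension whose base case is a discrete set; and since the paper's convention is that a positive current is one pairing nonnegatively against \emph{strongly} positive test forms, you should state the positivity step for strongly positive $\alpha$ throughout, which the restriction argument handles without change.
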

In the above, a current $\omega$ means that $\int_M \omega\wedge\chi$ formally make sense for all smooth $(p,p)$-forms $\chi$. In particular, any $(n-p,n-p)$-forms determines a current. The integral on smooth points of $V$ also determines a current $[V]$ by
\begin{equation}
\int_M [V]\wedge \chi =\int_{V^{\mathrm{smooth}}}\chi.
\end{equation}
If $\omega$ is a form, then
\begin{equation}
\int_M d\omega\wedge\chi = \int_M d(\omega\wedge\chi) - \int_M\omega\wedge d\chi = - \int_M\omega\wedge d\chi.
\end{equation}
This formula also defines $d$ acting on currents. A current $\omega$ is called a closed current if $d\omega=0$.
A $(p,p)$-form $\omega$ is positive if 
\begin{equation}
\omega\wedge\beta_1\wedge\bar\beta_1...\wedge\beta_{n-p}\wedge\bar\beta_{n-p}>0
\end{equation}
for any $\beta_i\in\Omega^1(M)$. $\omega$ is called strongly positive if
$\omega=\sum_i \beta_{i,1}\wedge\bar\beta_{i,1}...\wedge\beta_{i,p}\wedge\bar\beta_{i,p}$ for $\beta_{i,j}\in\Omega^1(M)$. It can be proved that for (1,1)-forms, strongly positive is the same as positive. We call a current $\omega$ as a positive (strongly positive) current if for any strongly positive (positive) forms, $\int_M \omega\wedge \chi>0$. In particular, if $\omega$ is a form, then $\omega$ is a positive (strongly positive) current if and only if $\omega$ is a positive (strongly positive) form. See \cite{Demailly} for more details on positive currents and subvarieties.

Lelong's theorem implies that any analytic subvariety $V$ defines a closed positive current $[V]$. Actually, it can be proved that $[V]$ determines an element in $H^{n-p,n-p}(M,\mathbb{C})\cap H^{2n-2p}(M,\mathbb{Z})$. One of the seven Clay problems is
\begin{conjecture}[\cite{Hodge}]
If $M$ is a smooth projective manifold, then $H^{n-p,n-p}(M,\mathbb{C})\cap H^{2n-2p}(M,\mathbb{Z})$ is the $\mathbb{Z}$-span of subvarieties.
\end{conjecture}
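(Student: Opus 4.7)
The plan is to attack the conjecture by analytic means, in the spirit of the lecture notes: one solves a generalized Monge-Amp\`ere equation whose right-hand side is engineered to concentrate on a candidate analytic set, and one then extracts from the solution a closed positive current representing the given Hodge class whose support is algebraic. Starting from $\alpha\in H^{n-p,n-p}(M,\mathbb{C})\cap H^{2n-2p}(M,\mathbb{Z})$, I would first use Hodge theory to fix a smooth representative and then use a Demailly-Paun style numerical criterion to place $\alpha$, up to an arbitrarily small K\"ahler perturbation, inside the cone of classes admitting strongly positive $(n-p,n-p)$-form representatives. The guiding analogy is the codimension-one case, where a class on the boundary of the K\"ahler cone is represented by a closed positive $(1,1)$-current whose Siu decomposition produces effective divisors; the goal is to mimic this in codimension $p$ using higher-order generalized Monge-Amp\`ere equations of the Chen and Datar-Pingali type.

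The steps in order are: \textbf{(i)} pick a reference K\"ahler form $\omega_0$ and a smooth strongly positive representative $\Omega$ of $\alpha$; \textbf{(ii)} introduce a degenerating family of equations of the shape $(\Omega+i\partial\bar\partial\psi_t)\wedge\omega_0^{p}=\mu_t$, with $\mu_t$ converging to a measure supported on an analytic subset $Z\subset M$ of codimension at least $p$; \textbf{(iii)} prove uniform a priori estimates strong enough that $\Omega_t:=\Omega+i\partial\bar\partial\psi_t$ admits a weak-$*$ limit which is a closed positive $(n-p,n-p)$-current $T\in\alpha$; \textbf{(iv)} apply Siu semicontinuity and the Siu decomposition to write $T=\sum_{j}\lambda_j[V_j]+R$ with $V_j$ analytic of codimension $p$, then use Chow's theorem to pass to algebraic $V_j$; \textbf{(v)} vary the data $(\Omega,\mu_t,Z)$ over a suitable family so as to arrange that the residual current $R$ is cohomologically trivial and the Lelong coefficients $\lambda_j$ lie in $\mathbb{Q}$, finally clearing denominators using that $\alpha$ is integral.

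The main obstacle is step \textbf{(v)}. The analytic machinery supplies \emph{currents}, not cycles, and nothing in pluripotential theory forces the Lelong numbers in the Siu decomposition to be rational or the diffuse residual $R$ to vanish in cohomology; this gap is precisely the content of the Hodge conjecture. One plausible route, genuinely in the spirit of these notes, is to tie the degeneration to sections of tensor powers of an ample line bundle, so that integrality of $\alpha$ is propagated along the family and the limiting current inherits a divisorial structure after iterated intersections with hyperplanes. Even with the results of Yau, Demailly-Paun, Datar-Pingali, and the generalized Monge-Amp\`ere theory assumed, bridging from closed positive currents to integral algebraic cycles is where the argument would, at present, remain conjectural, and any honest completion would require a genuinely new idea at the interface of pluripotential theory and the arithmetic of Hodge classes.
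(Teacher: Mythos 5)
You are attempting to ``prove'' the Hodge conjecture itself, which the paper explicitly leaves as an open conjecture: the notes provide no proof of this statement, only a five-step research program pointing toward it. Your write-up is, to your credit, candid about this. Steps (i)--(iv) sketch a plausible heuristic for manufacturing a closed positive $(n-p,n-p)$-current $T\in\alpha$ via a degenerating family of generalized Monge-Amp\`ere equations, and in step (v) and your closing paragraph you correctly locate the genuine gap: nothing in pluripotential theory forces the Siu coefficients $\lambda_j$ to be rational or the residual current $R$ to be cohomologically trivial, and that passage from closed positive currents to integral (or even rational) algebraic cycles is precisely the unresolved content of the conjecture. There is therefore no proof here to certify, and, equally, no proof in the paper against which to compare it.

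On the comparison of routes, your plan does differ from the program the notes actually lay out, and it is worth being precise about where. The notes proceed by \emph{duality}: first determine, via increasingly general Monge-Amp\`ere-type solvability criteria (Demailly--Paun for $(1,1)$ classes, the J-equation criterion for pairs of $(1,1)$ classes, Datar--Pingali's generalized equation for mixed $(1,1)$/$(p,p)$ data, and the conjectural step four for pure $(p,p)$ data), exactly which classes pair non-negatively against all $p$-dimensional subvarieties, thereby characterizing the closure of the real cone generated by cycles; only afterwards (step five) confront the passage from the real cone to the rational span, where the Babaee--Huh counterexample already warns that arbitrary representatives will not do. You instead propose a single constructive degeneration producing one current $T$ and then Siu-decomposing it. Both routes hit the same wall at rationality, but yours also carries an extra difficulty not present in the dual picture: Siu decomposition and the King--Demailly structure theorem are substantially weaker for closed positive $(n-p,n-p)$-currents with $p>1$ than in the divisor case --- the upper Lelong sets need not have the expected codimension $p$, and the uniform a priori estimates from step (iii) give weak-$*$ compactness but no control on Lelong sublevel sets. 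So even granting steps (i)--(iii), step (iv) is not automatic. A correct completion of this statement would, as you say, require a genuinely new idea bridging pluripotential theory and the arithmetic of Hodge classes; the notes do not supply one, and neither does your sketch.
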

Atiyah and Hirzebruch \cite{AtiyahHirzebruch} proved that Hodge's original conjecture does not hold. The modified version is the following
\begin{conjecture}[Hodge, modified version]
If $M$ is a smooth projective manifold, then $H^{n-p,n-p}(M,\mathbb{C})\cap H^{2n-2p}(M,\mathbb{Q})$ is the $\mathbb{Q}$-span of subvarieties.
\end{conjecture}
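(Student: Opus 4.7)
The plan is to approach this via the analytic machinery of closed positive currents and generalized Monge-Amp\`ere equations, rather than direct algebraic construction. Given a rational class $\alpha \in H^{n-p,n-p}(M,\mathbb{C})\cap H^{2n-2p}(M,\mathbb{Q})$, the strategy has three stages: first, reduce to representing $\alpha$ (after clearing denominators and adding a large multiple of a K\"ahler power) by a closed positive $(n-p,n-p)$-current; second, deform or concentrate this current by solving a generalized Monge-Amp\`ere equation so that its singular locus becomes an analytic set of pure codimension $p$; third, apply Siu's semicontinuity theorem so that the Lelong upper-level sets are analytic subvarieties, and invoke Chow to get algebraicity, thereby writing $\alpha$ as a $\mathbb{Q}$-combination of $[V_i]$.

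For the first stage, I would use the projectivity of $M$ to pick an ample class $[\omega]$ and, for $N\gg 0$, write $\alpha = (N[\omega]^{n-p}+\alpha) - N[\omega]^{n-p}$; a Demailly-Paun style cone criterion should place both summands in the interior of the pseudoeffective cone of $(n-p,n-p)$-classes. For the second stage, the natural equation to consider is of the form
\begin{equation}
(\omega_0 + i\partial\bar\partial\varphi)^p \wedge \eta = \mu,
\end{equation}
with $\eta$ an auxiliary positive $(n-p-p,n-p-p)$-form (or, in the deformation-to-subvariety picture, one follows a continuity path that forces the solution to blow up along a codimension-$p$ locus). This is exactly the setting in which the Datar-Pingali generalized Monge-Amp\`ere equations, and the author's work on their degeneration, provide the needed a priori estimates and collapsing behavior.

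The hard part, and the reason this remains a Clay problem, is the second stage: even assuming solvability of the generalized Monge-Amp\`ere equation, one must show that in the limit (or at a critical parameter on a continuity path) the resulting current does not merely have positive Lelong numbers on a set of the wrong dimension, but genuinely concentrates on an analytic subvariety of pure codimension $p$ whose cohomology class recovers a rational multiple of $\alpha$. Controlling the integrability of the solution $\varphi$, the analyticity of $E_c(T) = \{x : \nu(T,x)\ge c\}$ in the correct codimension, and the matching of cohomology classes under the limit are all coupled obstacles; each is only partially understood even for $p=1$ (where the Hodge conjecture is the Lefschetz $(1,1)$-theorem and the program reduces to the well-known correspondence between pseudoeffective divisor classes and Green's integral currents). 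The remaining lectures should be read as an attempt to set up, rather than complete, this program.
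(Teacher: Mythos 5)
The statement you are asked to prove is the modified Hodge conjecture, which is a Clay Millennium Problem: the paper does not prove it and explicitly presents it as an open conjecture, laying out only a five-step \emph{program} (Demailly--Paun for a $(1,1)$-class, the $J$-equation for two $(1,1)$-classes, Datar--Pingali for a $(1,1)$- and a $(p,p)$-class, then a $(p,p)$-class alone, then passage from real to rational spans). Your proposal is therefore not a proof and cannot be one; you acknowledge this yourself in your final paragraph, and that acknowledgment is the most accurate sentence in the proposal. What you have written is a sketch of roughly the same program the paper outlines, rephrased in the language of positive currents, Lelong upper-level sets, and Siu's analyticity theorem.

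Two further concrete gaps beyond the one you already flag. First, your first stage asserts that a ``Demailly--Paun style cone criterion'' places both $N[\omega]^{n-p}+\alpha$ and $N[\omega]^{n-p}$ in the interior of the pseudoeffective cone of $(n-p,n-p)$-classes; no such criterion is known for $p>1$ — identifying it is precisely the paper's Step 4, which is open, so you are assuming what needs to be proved. Second, even granting stages one and two, representing a class by a positive closed current carried by an analytic set does not yield a $\mathbb{Q}$-combination of subvariety classes: the current may have irrational multiplicities, and the positive-minus-positive decomposition need not be controllable cohomologically. The paper points to the Babaee--Huh counterexample exactly here: one cannot in general recover the class from an arbitrary representative, and the cancellation between the positive and negative parts is an unresolved obstruction your proposal does not address. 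Your proposal is best read the way you yourself read it — as a restatement of the program, not as progress toward closing it.
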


The goal of this minicourse is to try to generalize the Monge-Amp\`ere equation to study the Hodge conjecture. The first remark is to compare projective manifolds and K\"ahler manifolds. There is a weaker version on K\"ahler manifolds:
\begin{conjecture}
If $M$ is a K\"ahler manifold, then $H^{n-p,n-p}(M,\mathbb{C})\cap H^{2n-2p}(M,\mathbb{Q})$ is the $\mathbb{Q}$-span of Chern classes of coherent sheaves.
\end{conjecture}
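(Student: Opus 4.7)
The plan is to attack this generalized Hodge-type statement via the analytic tools that the mini-course is building up, namely generalized Monge-Amp\`ere equations on K\"ahler manifolds and the positivity machinery of Lelong, Demailly, and Demailly--Paun. Given a rational Hodge class $\alpha \in H^{n-p,n-p}(M,\mathbb{C}) \cap H^{2n-2p}(M,\mathbb{Q})$, I would first try to realize $\alpha$ as the cohomology class of a (possibly signed) closed $(n-p,n-p)$-current with analytic singularities. The strategy is to decompose $\alpha$ into a difference $\alpha = \alpha_+ - \alpha_-$ of classes that each contain a strictly positive representative in the interior of a pseudo-effective-type cone (shifted by a large multiple of $\omega_0^{n-p}$), then solve a generalized Monge-Amp\`ere equation of the type studied later in the notes (Yau, Datar--Pingali) to produce smooth, or at worst mildly singular, positive representatives $T_\pm$ in $\alpha_\pm$.

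Next I would apply Demailly's regularization theorem to replace $T_\pm$ by closed positive currents $\tilde T_\pm$ in the same class with analytic singularities along an analytic subvariety. Attached to such currents are multiplier ideal sheaves $\mathcal J(\tilde T_\pm)$, which are coherent, and whose Chern classes are computable in terms of the Lelong numbers and the geometry of the singularity locus via a Riemann--Roch / residue computation. The hope is that, after iterating this construction in successive codimensions (peeling off the top-codimension contribution, which is a rational combination of classes of the singular subvarieties appearing in $\tilde T_\pm$, and then running the argument on the remainder), one gets $\alpha$ as a $\mathbb{Q}$-linear combination of $c_k$ of explicit coherent sheaves built out of structure sheaves of subvarieties, multiplier ideals, and their tensor powers.

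The main obstacle, and where I expect the argument to genuinely fail in the non-projective case, is step one: there is no reason a generic rational $(n-p,n-p)$ class on a K\"ahler manifold should admit a positive representative, even after the $\omega_0^{n-p}$ shift, because the cone of classes representable by closed positive currents can be strictly smaller than the real Hodge cone in middle bidegrees (this is already the source of Voisin's counterexamples to the subvariety version on K\"ahler manifolds). A secondary obstacle is the bookkeeping in the induction: Chern classes of $\mathcal J(\tilde T)$ mix all codimensions, so isolating the contribution in degree $2(n-p)$ rationally requires a careful use of Grothendieck--Riemann--Roch together with vanishing of lower-codimension corrections, and one has to verify that the coherent sheaves produced in the inductive step lie in the \emph{same} ambient manifold $M$ rather than in blow-ups (otherwise pushforward can destroy the Hodge-type). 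Overcoming these two obstacles is, in essence, the content of the open conjecture, and I would regard a successful resolution of step one via a new existence theorem for solutions of a suitable generalized Monge-Amp\`ere system in higher codimension as the decisive advance.
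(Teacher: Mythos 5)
This statement is not something the paper proves, nor can it be proved: it is stated as a conjecture, and the very next sentence in the paper records that Voisin \cite{Voisin} \emph{disproved} it. The paper uses this falsity as motivation --- since even the weaker ``Chern classes of coherent sheaves'' version fails on general K\"ahler manifolds, the projectivity hypothesis must be used in an essential way in any eventual attack on the Hodge conjecture. So there is no proof to compare against; a blind attempt to prove this statement is, by Voisin's counterexample, doomed.

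That said, you largely diagnosed the correct obstruction yourself. You wrote that you ``expect the argument to genuinely fail in the non-projective case'' at the step of producing a positive representative for a rational Hodge class in middle bidegree, and that ``this is already the source of Voisin's counterexamples.'' That is essentially right, and it is exactly the reason the paper records the statement only to discard it. The concrete gap in your proposal, then, is not a fixable technical issue but a fundamental one: the decomposition $\alpha = \alpha_+ - \alpha_-$ into pseudo-effective-type classes with positive representatives, and hence the whole Monge--Amp\`ere/regularization/multiplier-ideal pipeline, has no reason to exist for a general K\"ahler $M$, and Voisin's examples show that it indeed does not. A sound treatment here would have been to recognize the statement as a conjecture known to be false, cite the counterexample, and explain (as the paper does) why this forces the Monge--Amp\`ere program of Steps (1)--(5) to exploit projectivity, for instance through an ample polarization $c_1(L)$, rather than to attempt a proof of an already-refuted assertion.
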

Voisin \cite{Voisin} proved that this weaker version is not true. Therefore, the modified version of the Hodge conjecture is also false on K\"ahler manifolds. Thus, to prove the Hodge conjecture, it is very important to use the projective condition in an essential way.

The projective condition means that there exists a K\"ahler form $\omega_0$ such that $[\omega_0]=c_1(L)$ for an ample line bundle $L$. So we expect this (1,1) form to play an important role. Thus, there might be five steps to solve the Hodge conjecture.
\begin{enumerate}
\item Relate a $(1,1)$ class with the real span of subvarieties.
\item Relate two $(1,1)$ classes with the real span of subvarieties.
\item Relate an $(1,1)$ class, a $(p,p)$ class with the real span of subvarieties.
\item Relate a $(p,p)$ class with the real span of subvarieties.
\item Reduce the real span of subvarieties to the rational span of subvarieties.
\end{enumerate}

For the first step, using Calabi's conjecture solved by Yau \cite{Yau}, Demailly and Paun \cite{DemaillyPaun} proved the following:
\begin{theorem}[\cite{DemaillyPaun}]
The space of all K\"ahler classes is one of the connected components of $P$ which consists of (1,1)-forms $[\omega]$ such that $\int_{V^p}\omega^p>0$ for all $p$-dimensional analytic subvariety $V^p$.
\end{theorem}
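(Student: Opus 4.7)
The plan is to establish the two inclusions implicit in the statement: first, that every K\"ahler class lies in $P$; and second, that $\mathcal{K}$ (the K\"ahler cone) exhausts the connected component of $P$ in which it sits. The inclusion $\mathcal{K}\subseteq P$ is immediate, since if $\omega$ is a K\"ahler representative of $[\omega]$, then $\omega^p$ restricts to a positive volume form on the smooth locus of any $p$-dimensional analytic subvariety $V^p$, so $\int_{V^p}\omega^p>0$. Openness of $\mathcal{K}$ in $H^{1,1}(M,\mathbb{R})$ is standard. The whole content is therefore the closedness statement: if $\alpha\in P$ is a limit of K\"ahler classes, then $\alpha$ is itself K\"ahler. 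Such an $\alpha$ is automatically nef, so the heart of the matter reduces to proving: \emph{if $\alpha$ is nef and $\int_{V^p}\alpha^p>0$ for every irreducible $p$-dimensional analytic subvariety $V^p$, then $\alpha$ is K\"ahler.}

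I would prove this by induction on $n=\dim_{\mathbb{C}} M$, the base case $n=1$ being the elementary fact that a nef class of positive degree on a curve is ample, hence K\"ahler. The key analytic input, built on Yau's theorem, is the construction of a \emph{K\"ahler current} in the class $\alpha$, i.e., a closed positive $(1,1)$-current $T=\alpha+i\partial\bar\partial\varphi\geq\delta\omega_0$ for some $\delta>0$. Concretely, since $\alpha+\epsilon\omega_0$ is K\"ahler for every $\epsilon>0$, I solve approximate Monge-Amp\`ere equations
\begin{equation}
(\alpha+\epsilon\omega_0+i\partial\bar\partial\varphi_\epsilon)^n=c_\epsilon f_\epsilon\,\omega_0^n,
\end{equation}
where the $f_\epsilon$ are non-negative densities concentrated near a prescribed point of $M$, normalized so the total mass matches $\int_M(\alpha+\epsilon\omega_0)^n$. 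Applying Yau's theorem to each such equation and passing to the limit $\epsilon\to 0$ via a mass concentration / compactness argument for the quasi-psh potentials $\varphi_\epsilon$ (crucially using the numerical hypothesis $\int_M\alpha^n>0$) yields a closed positive current in $\alpha$ with prescribed positive Lelong number at that point. A suitable combination and regularization of such currents produces the desired K\"ahler current $T$ globally.

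By Demailly's regularization theorem, I may replace $T$ by a K\"ahler current with analytic singularities along an analytic subvariety $E\subsetneq M$, with $T\geq\delta'\omega_0$ off $E$ for some $\delta'>0$. If $E=\emptyset$, then $T$ is smooth and $\alpha$ is K\"ahler, and we are done. Otherwise, for each irreducible component $E_j$ of $E$, pull $\alpha$ back to a resolution $\tilde E_j\to E_j$; the resulting class on $\tilde E_j$ is nef and still satisfies the numerical positivity hypothesis on its subvarieties, since any subvariety of $\tilde E_j$ pushes forward to a subvariety of $M$ with the same $\alpha^p$-integral. By the induction hypothesis, $\alpha|_{\tilde E_j}$ is K\"ahler. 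Using an extension/gluing procedure (an Ohsawa-Takegoshi type $L^2$ extension of the K\"ahler representative from each $\tilde E_j$, combined with the existing K\"ahler current off $E$) one constructs a new K\"ahler current $T'$ in $\alpha$ whose singular set is a proper analytic subset of $E$. Iterating and invoking Noetherianity of the descending chain of singular sets produces, after finitely many steps, a smooth K\"ahler form in $\alpha$.

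The main obstacle is the construction and subsequent refinement of the K\"ahler current. The initial mass concentration step demands uniform a priori estimates on the MA potentials as $\epsilon\to 0$ that survive the degeneration of $\alpha+\epsilon\omega_0$ to a merely nef class; and the inductive step demands the extension of K\"ahler forms from possibly highly singular subvarieties $E_j$ to K\"ahler currents on the ambient manifold while strictly shrinking the singular locus. Once these analytic tools are available, the overall induction on dimension is a clean iterative reduction, and the equivalence with the connected-component formulation follows formally.
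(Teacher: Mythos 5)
Your overall skeleton matches the Demailly--Paun strategy as presented in the paper: reduce the connected-component statement to showing that a nef and big class with $\int_{V^p}\alpha^p>0$ on all proper subvarieties is K\"ahler, build a K\"ahler current via Yau's theorem, regularize it to have analytic singularities, and use induction on dimension together with Hironaka resolution and gluing. However, there is a genuine gap at the single most important step, the construction of the K\"ahler current. You concentrate Monge--Amp\`ere mass near \emph{one prescribed point} $x\in M$ and in the limit $\epsilon\to 0$ obtain a closed positive current in $\alpha$ with a positive Lelong number at $x$; you then assert that ``a suitable combination and regularization of such currents produces the desired K\"ahler current $T$ globally.'' This is precisely where the naive version fails. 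Concentrating at an individual point controls the current only near that point; away from $x$ the limiting current can spread out to have arbitrarily small density, and there is no straightforward averaging or envelope of the family $\{T_x\}_{x\in M}$ that produces a single current uniformly bounded below by $\delta\omega_0$. The device that makes this work --- and the key insight of Demailly--Paun --- is the diagonal trick: apply Yau's theorem on $M\times M$ with the reference volume form rigged so that the Monge--Amp\`ere mass concentrates on the diagonal $D\subset M\times M$, so that $\omega_{t,\epsilon,\delta}^n$ acquires in the limit a component proportional to the integration current $[D]$; pushing forward by $(\pi_1)_*(\,\cdot\,\wedge\pi_2^*\chi)$ converts the $[D]$-piece into a uniform $\epsilon'\chi$ lower bound simultaneously at every point of $M$, while the remainder stays a positive current. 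Without this (or an equivalent mechanism) the passage from ``Lelong number at a point'' to ``K\"ahler current'' does not go through, so this step must be supplied.

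Two smaller remarks on the back end. First, the paper's regularization argument finishes in a single step, not by iteration: Demailly's Bergman approximation produces a potential that is smooth away from an analytic set $V$ of strictly smaller dimension, the induction hypothesis (after Hironaka resolution) provides a smooth K\"ahler potential near $V$, and the regularized maximum $\tilde\max$ of the two is already a global smooth K\"ahler potential; no Noetherian descent on singular loci is needed. Your Ohsawa--Takegoshi extension and iterated shrinking of the singular set is a legitimate alternative framing used in some later treatments, but it is more elaborate than the original. Second, when you pass to a resolution $\tilde E_j\to E_j$ and claim the positivity hypothesis on subvarieties persists, note that the pulled-back class is nef but generally no longer satisfies strict positivity on subvarieties contracted by the resolution map; the paper sidesteps this by working with the induced class on the (possibly singular) subvariety itself and applying the induction hypothesis after resolving, where the relevant integrals are computed via pushforward to $M$. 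This is a standard but nontrivial point, and if you phrase the induction on resolutions you need to handle the contracted subvarieties explicitly (e.g., by adding a small multiple of the exceptional class).
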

For the second step, the author proved the following:
\begin{theorem}[\cite{Chen}]
Let $M^n$ be a K\"ahler manifolds with K\"ahler forms $\chi$ and $\omega_0$. Let $c$ be the constant such that
\begin{equation}
\int_M c_0\omega_0^n = n \chi \wedge \omega_0^{n-1}.
\end{equation}
Then the J-equation
\begin{equation}
c_0\omega_\varphi^n = n \chi \wedge \omega_\varphi^{n-1}
\end{equation}
is solvable if and only if there exists $\epsilon>0$ independent of $V$ such that
\begin{equation}
\int_{V^p} c_0\omega_0^p - p \chi\wedge\omega_0^{p-1}\ge \epsilon (n-p) \int_{V^p} \omega_0^p.
\end{equation}
for all analytic subvariety $V^p$.
\end{theorem}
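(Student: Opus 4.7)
My plan is to dispose of necessity first by a pointwise algebraic argument, then to attack sufficiency by a continuity method whose main obstacle is a uniform $C^0$ estimate powered by the subvariety hypothesis.

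\textbf{Necessity.} Suppose $\omega_\varphi$ solves the equation. At each $x \in M$ diagonalize $\chi$ with respect to $\omega_\varphi$ and obtain positive eigenvalues $\lambda_1,\ldots,\lambda_n$ whose sum equals $c_0$ by the J-equation. By compactness of $M$ and positivity of $\chi$, there is a uniform $\epsilon > 0$ with $\lambda_i \ge \epsilon$. For any complex $p$-plane $W \subset T_xM$ adapted to the diagonalization, the $(p,p)$-form $c_0\omega_\varphi^p - p\chi\wedge\omega_\varphi^{p-1}$ restricted to $W$ equals $(c_0-\sum_{j}\lambda_{i_j})\omega_\varphi^p\big|_W = \bigl(\sum_{k\notin\{i_j\}}\lambda_k\bigr)\omega_\varphi^p\big|_W \ge (n-p)\epsilon\,\omega_\varphi^p\big|_W$; general $p$-planes are handled by strong positivity of the resulting form. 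Integrating this pointwise bound over the smooth locus of $V^p$ and using $[\omega_\varphi] = [\omega_0]$ yields the stated inequality.

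\textbf{Sufficiency, continuity setup.} The hard direction I would tackle by a continuity method in a parameter $t \in [0,1]$, connecting the J-equation at $t=0$ to a manifestly solvable reference equation at $t = 1$. A natural family is $c_t\omega_{\varphi_t}^n = n(\chi + t\omega_0)\wedge \omega_{\varphi_t}^{n-1}$, with $c_t$ determined by the cohomological integral constraint; for $t$ large this becomes a Calabi--Yau problem solvable by Yau's theorem, and the subvariety inequality propagates in $t$ since the classes vary linearly in $t$. Openness of the set of $t$ admitting a smooth solution is standard: the linearization is an elliptic second-order operator whose only kernel is the constants, which are killed by the usual normalization of $\varphi_t$.

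\textbf{Closedness reduces to $C^0$.} Closedness along the path reduces to uniform $C^\infty$ bounds on $\varphi_t$. The Evans--Krylov theorem together with Calabi's third-order identity propagate $C^\infty$ bounds from $C^{2,\alpha}$; the $C^2$ bound in turn reduces to a uniform upper bound on $\operatorname{tr}_{\omega_0}\omega_{\varphi_t}$ via a Yau-style maximum principle, and that trace bound is controlled by the oscillation $\sup\varphi_t - \inf\varphi_t$. So the entire question is whether a uniform bound $\|\varphi_t - \sup\varphi_t\|_{L^\infty} \le C$ can be extracted from the subvariety hypothesis.

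\textbf{The main obstacle.} The $C^0$ estimate is the technical heart of the argument, and where I expect the real difficulty to lie. I would argue by contradiction: assume along some sequence $t_k$ the normalized potentials $\psi_k = \varphi_{t_k} - \sup\varphi_{t_k}$ satisfy $\inf\psi_k \to -\infty$. Extract a subsequential weak limit $\psi_\infty$, an $\omega_0$-plurisubharmonic function whose associated current carries nontrivial Lelong numbers; by Siu's semicontinuity and the Lelong--Siu decomposition these concentrate along an analytic subvariety $V^p$. Pairing the equation against carefully chosen test forms built from $\omega_{\varphi_{t_k}}$ and a desingularized model of $[V]$, and passing to the limit using Demailly--Paun regularization, should yield an inequality that violates the hypothesized slack $\epsilon(n-p)\int_{V^p}\omega_0^p$, contradicting our assumption. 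Making the concentration rigorous and justifying the passage to the limit for Monge--Amp\`ere-type masses in pluripotential theory, so that exactly the dimension $p$ of the concentration locus matches the $(n-p)$ appearing in the hypothesis, is the delicate point on which the whole argument hinges.
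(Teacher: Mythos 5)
Your necessity argument is essentially correct: diagonalizing $\chi$ with respect to $\omega_\varphi$, the $(p,p)$-form $c_0\omega_\varphi^p - p\,\chi\wedge\omega_\varphi^{p-1} - \epsilon(n-p)\omega_\varphi^p$ has coefficients $\sum_{i\notin I}\lambda_i - (n-p)\epsilon \ge 0$ in the diagonal basis, so it is a nonnegative combination of strongly positive basis forms and hence may be integrated over $V^{p,\mathrm{smooth}}$; this is the same easy direction the paper takes.

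The sufficiency argument, however, is a genuinely different route and has a real gap. The paper does \emph{not} attempt to extract a $C^0$ estimate directly from the subvariety inequality inside a single continuity method. Instead, it factors the problem into two pieces: (1) an \emph{analytic existence theorem}, proved by the continuity method along $\chi_t = t\chi + (1-t)\frac{c}{n}\omega_0$ (so that at $t=0$ there is the trivial solution $\varphi\equiv 0$), whose hypotheses are a pointwise cone condition $c\,\omega_0^{n-1}-(n-1)\chi\wedge\omega_0^{n-2}>0$ together with a lower bound on the twisting function $f$ and an integral constraint; and (2) a current-theoretic argument showing that the uniform subvariety inequality produces a \emph{smooth} $\omega_0' \in [\omega_0]$ satisfying this cone condition. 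It is step (2), not a $C^0$ estimate, that is the technical heart, and it runs through concentration of mass on the diagonal of $M\times M$ (to manufacture the extra $\epsilon\chi$), a notion of the cone condition in the sense of smoothing, Siu's theorem that the high-Lelong-number locus $V_\epsilon=\{\nu(x)>\epsilon\}$ is analytic, induction on dimension plus Hironaka resolution to construct a good potential near $V_\epsilon$, and gluing by regularized maximum as in B{\l}ocki--Ko{\l}odziej. Your proposal omits the cone condition altogether, which is what actually keeps the PDE uniformly elliptic along the continuity path; without first securing a representative in the cone, the path you set up can degenerate and no amount of contradiction-via-Lelong-numbers will rescue the $C^0$ bound. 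Your own last sentence identifies exactly this unfilled hole. Two smaller points: your path $\chi+t\omega_0$ does not have a clean endpoint — at no finite or infinite $t$ does it reduce to a trivially solvable problem, and in the limit it is a constant-trace equation, not a Calabi--Yau (Monge--Amp\`ere) problem, so Yau's theorem does not apply there; and the closedness reduction to an oscillation bound is only valid once you already know the cone condition holds uniformly along the path.
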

Song \cite{Song} improved this result to allow $\epsilon$ to depend on $V$.
For the third step, the question is the following:
\begin{question}
Let $\omega_0$ be a K\"ahler form. Find out a  condition on $(n-k,n-k)$ forms $\chi_k$ such that
\begin{equation}
\omega_\varphi^n=\sum_{k=0}^{n-1}\chi_k\wedge\omega_\varphi^k
\end{equation}
is solvable if and only if
\begin{equation}
\int_{V^p} \omega_0^p-\sum_{k=n-p}^{n-1}\frac{k!}{n!}\frac{p!}{(k+p-n)!}\chi_k\wedge\omega_0^{k+p-n} \ge 0
\end{equation}
for all subvarieties $V^p\subset M^n$, with equation if and only if $V^p=M^n$.
\end{question}
Datar and Pingali \cite{DatarPingali} proved that this is true if $M$ is projective, and $\chi_k=c_k\chi^{n-k}$ for a K\"ahler form $\chi$ and non-negative but not all $0$ constants $c_k$. The projective condition plays an important role in this result, which is what we hope. Remark that for the deformed Hermitian-Yang-Mills equation, some of $\chi_k$ are negative. So the best condition should involve $\chi_k$ as a tuple rather than studying them individually.

For the fourth step, recall that Kleiman \cite{Kleiman} proved that if $M$ is projective, then $[\omega]$ is nef (i.e. $[\omega+t\chi]$ is a K\"ahler class for all $t>0$, where $\chi$ is another K\"ahler form.) if and only if $\int_C \omega \ge 0$ for all curves $C$. The idea is to take the intersection with ample hyperplanes to reduce the dimension of subvarieties. If one can solve the third step, then the next question is
\begin{question}
Let $M$ be a smooth projective manifold. Then what can we say about a $(p,p)$-form $\omega$ such that
\begin{equation}
\int_V \omega \ge 0
\end{equation}
for all $p$-dimensional subvariety $V$?
\end{question}

If we can solve the fourth step, then by duality, we would have a full understanding of the closure of the real span of $p$-dimensional subvarieties. Then we still need to study the convergence problem to reduce the closure of the real span to the rational span. We also need the cancellation of positive coefficients with negative coefficients. The counterexample by Babaee and Huh \cite{BabaeeHuh} shows that this can not be achieved directly by an arbitrary subvariety in a cohomology class. To solve this problem, we may need to find a better representative instead. This may be achieved by studying the locus where the corresponding flow diverges. Then we need to understand the bubble tree structure for the convergence of subvarieties with certain bounds in order to fully understand the cohomology class.

The goal of this minicourse is to sketch the results of Yau \cite{Yau}, Demailly-Paun \cite{DemaillyPaun}, the author \cite{Chen}, and Datar-Pingali \cite{DatarPingali}, so that students of this minicourse and readers of these lecture notes may make further progress with this approach.

\section{Analytic existence theorems}
\subsection{Introduction}
In this section, we sketch the proof of the analytic existence theorem. Recall that Yau \cite{Yau} proved that $\omega_0>0$ implies that
\begin{equation}
\omega_\varphi^n=e^f\omega_0^n
\end{equation}
can be solved as long as the integral is equal to each other.
For the J-equation, the following result was proved by the author \cite{Chen} as an improvement of Song-Weinkove's result \cite{SongWeinkove}.
\begin{theorem}
If
\begin{equation}
c\omega_0^{n-1}-(n-1)\chi\wedge\omega_0^{n-2}>0,
\end{equation}
\begin{equation}
f>-\frac{1}{2n}(\frac{1}{c})^{n-1},
\end{equation}
and
\begin{equation}
\int_M f\chi^n=c\int_M\omega_0^n-\int_M n\chi\wedge\omega_0^{n-1} \ge 0,
\end{equation}
then there exists $\omega_\varphi=\omega_0+i\partial\bar\partial\varphi>0$
such that
\begin{equation}
c\omega_\varphi^{n-1}-(n-1)\chi\wedge\omega_\varphi^{n-2}>0
\end{equation}
and
\begin{equation}
f\chi^n=c\omega_\varphi^n-n\chi\wedge\omega_\varphi^{n-1}.
\end{equation}
\end{theorem}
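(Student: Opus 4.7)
The plan is to use the continuity method. I would introduce a one-parameter family of equations interpolating between a trivially solvable problem and the target equation, namely
\begin{equation*}
f_t\chi^n = c\omega_{\varphi_t}^n - n\chi\wedge\omega_{\varphi_t}^{n-1}, \qquad t\in [0,1],
\end{equation*}
with $f_0 := (c\omega_0^n - n\chi\wedge\omega_0^{n-1})/\chi^n$ (so that $\varphi\equiv 0$ solves the $t=0$ problem) and $f_t$ a smooth family connecting $f_0$ to $f$, arranged so that the integral compatibility holds for every $t$ and $f_t$ stays above the threshold $-\frac{1}{2n}(\frac{1}{c})^{n-1}$. Let $T\subset[0,1]$ denote the set of $t$ for which there is a $C^{2,\alpha}$ solution $\varphi_t$ with $\omega_{\varphi_t}>0$ and $c\omega_{\varphi_t}^{n-1}-(n-1)\chi\wedge\omega_{\varphi_t}^{n-2}>0$. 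I want to show $T$ is non-empty, open, and closed in $[0,1]$.

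Openness follows from the implicit function theorem. The linearization of $F(\varphi):=(c\omega_\varphi^n-n\chi\wedge\omega_\varphi^{n-1})/\chi^n$ at $\varphi_t$ is
\begin{equation*}
DF_{\varphi_t}(u) = n\,\frac{\bigl(c\omega_{\varphi_t}^{n-1}-(n-1)\chi\wedge\omega_{\varphi_t}^{n-2}\bigr)\wedge i\partial\bar\partial u}{\chi^n},
\end{equation*}
which is strictly elliptic precisely because of the cone condition at $\varphi_t$. After restricting to the orthogonal complement of constants and using the integral compatibility to match the image, this operator is an isomorphism between the appropriate Hölder spaces, and openness follows.

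Closedness is the substantive part and requires uniform a priori estimates along the path. After normalizing $\sup_M\varphi_t=0$, a $C^0$ bound should follow from Yau/Moser-type arguments using the lower bound on $f_t$. For the $C^2$ estimate one applies the maximum principle to a test function of the form $\log\operatorname{tr}_\chi\omega_{\varphi_t}-A\varphi_t$ with $A$ large depending on the bisectional curvature of $\chi$; differentiating the equation twice and invoking the concavity of the underlying symmetric operator on the relevant cone, one dominates bad curvature terms and obtains a uniform upper bound on $\operatorname{tr}_\chi\omega_{\varphi_t}$. Once this holds, the equation becomes uniformly elliptic and concave, so Evans--Krylov gives uniform $C^{2,\alpha}$ bounds, and Schauder bootstrapping gives $C^\infty$, allowing one to pass to the limit at any endpoint of $T$.

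The main obstacle is preserving the cone condition $c\omega_{\varphi_t}^{n-1}-(n-1)\chi\wedge\omega_{\varphi_t}^{n-2}>0$ throughout the path, since both the ellipticity in openness and the concavity in the $C^2$ estimate depend on it. In terms of the eigenvalues $\lambda_i$ of $\omega_{\varphi_t}$ with respect to $\chi$, the equation reads $c\prod_i\lambda_i-\sum_i\prod_{j\neq i}\lambda_j = f_t$, and a pointwise algebraic argument on the $\lambda_i$ should show that the explicit threshold $f_t > -\frac{1}{2n}(\frac{1}{c})^{n-1}$ is precisely what prevents the eigenvalue configurations compatible with this identity (and with $\omega_{\varphi_t}>0$) from leaving the cone, yielding a uniform quantitative lower bound on $c\omega_{\varphi_t}^{n-1}-(n-1)\chi\wedge\omega_{\varphi_t}^{n-2}$. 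Establishing this sharp algebraic threshold, which refines the pointwise positivity used by Song--Weinkove, is the crux of the improvement and is where the specific constant $\frac{1}{2n}(\frac{1}{c})^{n-1}$ enters.
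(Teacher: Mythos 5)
Your overall strategy --- continuity method, ellipticity from the cone condition via the linearization you wrote, Evans--Krylov plus Schauder for higher regularity, and identifying the pointwise algebraic observation that $f > -\frac{1}{2n}(\frac{1}{c})^{n-1}$ is what keeps the eigenvalues inside the cone --- matches the structure of the paper's proof, including the choice of the $C^2$ test function $\log tr_\chi\omega_\varphi - A\varphi$. The gap is in the choice of continuity path.

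You start the path at $f_0 := (c\omega_0^n - n\chi\wedge\omega_0^{n-1})/\chi^n$ so that $\varphi\equiv 0$ solves the $t=0$ problem, and you ask for $f_t$ connecting $f_0$ to $f$ that (i) preserves the integral compatibility and (ii) stays above the threshold. But $f_0$ itself can already be below the threshold even when both hypotheses of the theorem hold. For instance with $n=2$, $\chi_{i\bar j}=\delta_{i\bar j}$, and $\omega_{0,i\bar j}=\bigl(\frac{1}{c}+\epsilon\bigr)\delta_{i\bar j}$ for small $\epsilon>0$, the cone condition $c\omega_0-\chi>0$ holds, yet
\begin{equation*}
f_0 = c\Bigl(\tfrac{1}{c}+\epsilon\Bigr)^2 - 2\Bigl(\tfrac{1}{c}+\epsilon\Bigr) \longrightarrow -\tfrac{1}{c} < -\tfrac{1}{4c} = -\tfrac{1}{2n}\Bigl(\tfrac{1}{c}\Bigr)^{n-1}
\end{equation*}
as $\epsilon\to0$. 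Since every admissible path must begin at $f_0$, no path can stay above the threshold, and the crucial cone-preservation argument is unavailable near $t=0$: the set of solvable $t$ need not be closed there. The paper avoids this by running the continuity method in two stages. First it deforms the reference form $\chi_t=t\chi+(1-t)\tfrac{c}{n}\omega_0$ while keeping the right-hand side equal to the constant $c_t\ge 0$ fixed by the integral condition (one computes $c\int\omega_0^n - n\int\chi_t\wedge\omega_0^{n-1} = t\int f\chi^n \ge 0$, so $c_t\ge 0$ is automatic and the threshold is trivially met, and the cone condition at $t=0$ reduces to $\tfrac{c}{n}\omega_0^{n-1}>0$). Only then, with $\chi$ fixed, does it deform $f_t=t\frac{\int f\chi^n}{\int\chi^n}+(1-t)f$, which stays above the threshold since it is a convex combination of a nonnegative constant and $f$. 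If you replace your single path by this two-stage path, the rest of your outline goes through as written.
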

To understand the cone condition
\begin{equation}
c\omega_\varphi^{n-1}-(n-1)\chi\wedge\omega_\varphi^{n-2}>0,
\end{equation}
at each point $x\in M$, we write 
\begin{equation}
\chi=i \delta_{i\bar j} d z^i\wedge d\bar z^j,
\end{equation}
\begin{equation}
\omega_\varphi=i \lambda_i\delta_{i\bar j} d z^i\wedge d\bar z^j.
\end{equation}
Then the cone condition is equivalent to saying that
\begin{equation}
\sum_{i\not=k}\frac{1}{\lambda_i}<c
\end{equation}
for all $k=1,2,...,n$.
The J-equation can be written as
\begin{equation}
\sum_{i=1}^{n}\frac{1}{\lambda_i}=c,
\end{equation}
and the twisted version is
\begin{equation}
\sum_{i=1}^{n}\frac{1}{\lambda_i}+\frac{f}{\prod_{i=1}^{n}\lambda_i}=c.
\end{equation}
Remark that the condition on $f$ implies that if 
\begin{equation}
\sum_{i=1}^{n}\frac{1}{\lambda_i}+\frac{f}{\prod_{i=1}^{n}\lambda_i}=c,
\end{equation}
and
\begin{equation}
\sum_{i\not=k}\frac{1}{\lambda_i}\le c,
\end{equation}
then
\begin{equation}
\sum_{i\not=k}\frac{1}{\lambda_i}< c.
\end{equation}
In other words, if we use the continuity method, then the cone condition is preserved along the path. To prove this, if
\begin{equation}
\sum_{i\not=k}\frac{1}{\lambda_i}\le c,
\end{equation}
then $c\ge\frac{1}{\lambda_i}$. So $\frac{1}{\prod_{i\not=k}\lambda_i}\le c^{n-1}$.
So
\begin{equation}
\begin{split}
c = \sum_{i=1}^{n}\frac{1}{\lambda_i}+\frac{f}{\prod_{i=1}^{n}\lambda_i}\\
= \frac{1}{\lambda_k}+\sum_{i\not=k}\frac{1}{\lambda_i}+\frac{f}{\prod_{i\not=k}\lambda_i}\frac{1}{\lambda_k}\\
= \sum_{i\not=k}\frac{1}{\lambda_i}+\frac{1}{\lambda_k}(1+\frac{f}{\prod_{i\not=k}\lambda_i}).
\end{split}
\end{equation}
Since
\begin{equation}
1+\frac{f}{\prod_{i\not=k}\lambda_i} > 1 - \frac{1}{2n}(\frac{1}{c})^{n-1} c^{n-1} > \frac{1}{2},
\end{equation}
we see that the cone condition is satisfied.

In Datar-Pingali's paper \cite{DatarPingali}, they proved that there exists a constant $\epsilon>0$ depending only on dimensions and the classes $[\omega_0]$, $[\chi]$ such that if the cone condition
\begin{equation}
n \omega_0^{n-1} - \sum_{k=1}^{n-1}c_k k \chi^{n-k}\wedge \omega_0^{k-1}>0
\end{equation}
is satisfied, and $f>-\epsilon$ is a function such that
\begin{equation}
\int_M f\chi^n=\int_M \omega_0^n - \sum_{k=1}^{n-1}\int_M c_k \chi^{n-k}\wedge\omega_0^k\ge 0,
\end{equation} 
then there exists $\omega_\varphi=\omega_0+i\partial\bar\partial\varphi>0$ satisfying the condition and solves the twisted generalized Monge-Amp\`ere equation
\begin{equation}
f\chi^n = \omega_\varphi^n - \sum_{k=1}^{n-1} c_k \chi^{n-k}\wedge \omega_\varphi^k.
\end{equation}

All of the analytic existence theorems were proved using the continuity method. Yau \cite{Yau} used the path
\begin{equation}
\omega_\varphi^n = e^{tf} \omega_0^n,
\end{equation}
the author \cite{Chen} used the path
\begin{equation}
\chi_t=t\chi+(1-t)\frac{c}{n}\omega_0,
\end{equation}
and the path
\begin{equation}
f_t = t \frac{\int_M f\chi^n}{\int_M \chi^n} + (1-t) f,
\end{equation}
and Datar-Pingali \cite{DatarPingali} used the path
\begin{equation}
\omega_t^n = t (\sum_{k=1}^{n-1} c_k\chi^{n-k}\wedge\omega_t^k +f\chi^n) + (1-t) c_0 \chi^n.
\end{equation}
In all of these cases, they proved that the conditions in the analytic existence theorems such as $f>-\epsilon$ and the integral conditions are all satisfied, and the cone condition is preserved along the path. So it suffices to prove the openness and an a priori estimate in all of these cases with the cone condition.
To prove the openness, Yau's path \cite{Yau} is
\begin{equation}
\omega_\varphi^n = e^{tf}\omega_0^n.
\end{equation}
So
\begin{equation}
\log(\frac{\omega_\varphi^n}{\omega_0^n})=tf.
\end{equation}
So
\begin{equation}
f=tr_{\omega_{\varphi_t}} \frac{d}{dt} \omega_{\varphi_t} = \Delta_{\omega_{\varphi_t}}\frac{d}{dt}\varphi_t.
\end{equation}
The Laplacian equation can be solved with the integral condition. For the J-equation,
\begin{equation}
f_t\chi_t^n = c\omega_t^n - n \chi_t\wedge\omega_t^{n-1}.
\end{equation}
So
\begin{equation}
cn\omega_t^{n-1}\wedge \frac{d}{dt} \omega_t - n(n-1) \chi_t \wedge \omega_t^{n-2}\wedge \frac{d}{dt} \omega_t
\end{equation}
can be written in terms of terms without $\frac{d}{dt} \omega_t$. The cone condition implies that 
\begin{equation}
cn\omega_t^{n-1} - n(n-1) \chi_t \wedge \omega_t^{n-2}
\end{equation}
is positive. So it can be proved that this is also an elliptic equation. Thus, the openness is also insured by the integral condition. The cone condition is used in Datar-Pingali's work \cite{DatarPingali} in a similar way.

\subsection{$L^1$-estimate}

We use Yau's method \cite{Yau} to prove the $L^1$-bound in all of the cases above. As long as $\omega_0+i\partial\bar\partial\varphi>0$,
\begin{equation}
tr_{\omega_0} (\omega_0+i\partial\bar\partial\varphi) = n+\Delta_{\omega_0}\varphi>0.
\end{equation}
Assume that $\int_M \varphi\omega_0^n=0$, then there exists a Green function $G:M\times M\to \mathbb{R}$ such that
\begin{equation}
\varphi(p)=-\int_M G(p,q) \Delta \varphi(q) dq.
\end{equation}
There exists a constant $K$ such that $G\ge -K$, so
\begin{equation}
\varphi(p)=-\int_M (G(p,q)+K) \Delta \varphi(q) dq \le n\int_M (G(p,q)+K) dq \le C.
\end{equation}
In other words, if $\int_M \varphi\omega_0^n=0$, then $\sup_M \varphi\le C$. If we define
\begin{equation}
\tilde\varphi = \varphi-(\sup_M\varphi+1),
\end{equation}
then
\begin{equation}
\sup_M\tilde\varphi = -1,
\end{equation}
and
\begin{equation}
\int_M |\tilde\varphi| = |\int_M \tilde\varphi| \le C.
\end{equation}
We can replace $\varphi$ by $\tilde\varphi$ so that we can assume that
\begin{equation}
\sup_M\varphi=-1,
\end{equation}
and
\begin{equation}
\int_M |\varphi|\le C.
\end{equation}
This proves the $L^1$-bound in all the cases.

\subsection{$C^0$-$C^2$ bounds}

Then we need $C^0$-$C^2$ bounds. In Yau's paper \cite{Yau}, the equation is
\begin{equation}
f = \log (\frac{\omega_\varphi^n}{\omega_0^n}).
\end{equation}
So
\begin{equation}
Df = \omega^{\varphi,i\bar j} D\omega_{\varphi, i\bar j} + ...,
\end{equation}
and
\begin{equation}
D^2 f = \omega^{\varphi,i\bar j} D^2\omega_{\varphi, i\bar j} -\omega^{\varphi,i\bar l} (D\omega_{\varphi, k\bar l}) \omega^{\varphi,k\bar j} (D\omega_{\varphi, i\bar j}) + ...,
\end{equation}
The term $\omega^{\varphi,i\bar j} D^2\omega_{\varphi, i\bar j}$ looks like $\Delta_\varphi(\Delta \varphi)$.
A more detailed calculation in \cite{Yau} proves that
\begin{equation}
\Delta_\varphi(\Delta \varphi) \ge \Delta f + \varphi_{k\bar n\bar l}\varphi_{i\bar j l}\omega^{k \bar j}_{\varphi} \omega_{\varphi}^{i \bar n} - C (\sum_{i,l} \frac{1+\varphi_{i, \bar i}}{1+\varphi_{l,\bar l}}-m^2).
\end{equation}
The third order derivative term helps us but we need to cancel the last term. Note that
\begin{equation}
\Delta_{\omega_{\varphi}}\varphi = \sum_i \frac{\varphi_{i\bar i}}{1+\varphi_{i\bar i}}
\end{equation}
looks like the last term. With the help of the second term, Yau \cite{Yau} manages to prove that
\begin{equation}\Delta_{\varphi} (e^{-A\varphi}(n+\Delta\varphi)) \ge -e^{-A\varphi} \Big( -C-C(n+\Delta\varphi)+C(n+\Delta\varphi)(\sum_i\frac{1}{1+\varphi_{i\bar i}})\Big)
\end{equation}
for any large enough constant $A$ and a constant $C$ depending on $A$.  Let $\lambda_i=1+\varphi_{i\bar i}$, then
\begin{equation}
(\sum_i \frac{1}{\lambda_i})^{n-1} \ge C \sum_{i} \frac{1}{\prod_{k\not=i}\lambda_k} = C\frac{\sum_i \lambda_i} {\prod_k \lambda_k} \ge C(n+\Delta\varphi)
\end{equation}
because the equation is that $\prod_k \lambda_k=e^f$.
Thus
\begin{equation}\Delta_{\varphi} (e^{-A\varphi}(n+\Delta\varphi)) \ge e^{-A\varphi} \Big( -C-C(n+\Delta\varphi)+C(n+\Delta\varphi)^{\frac{n}{n-1}}\Big)
\end{equation}
At the maximal point of $e^{-A\varphi}(n+\Delta\varphi)$,
\begin{equation}(n+\Delta\varphi)^{\frac{n}{n-1}} \le C (n+\Delta\varphi) + C.
\end{equation}
This implies that $n+\Delta\varphi\le C$ at this point.
So
\begin{equation}e^{-A\varphi}(n+\Delta\varphi) \le C e^{-A \inf\varphi}
\end{equation}
at this point and therefore at all points on $M$. In other words,
\begin{equation}n+\Delta\varphi \le C e^{A(\varphi-\inf\varphi)}
\end{equation}
on $M$ for all large enough $A$ and a constant $C$ depending on $A$.
To get the $C^0$ bound, recall that
\begin{equation}\Delta_{\varphi} (e^{-A\varphi}(n+\Delta\varphi)) \ge e^{-A\varphi} \Big( C(n+\Delta\varphi)-C\Big)
\end{equation}
using the inequality that
\begin{equation}(n+\Delta\varphi)^{\frac{n}{n-1}} \ge \frac{1}{\epsilon} (n+\Delta\varphi)-C_\epsilon
\end{equation}
for all $\epsilon>0$.
Moreover,
\begin{equation}
\int_M e^f \Delta_{\varphi} (e^{-A\varphi}(n+\Delta\varphi)) \omega_0^n = \int_M \Delta_{\varphi} (e^{-A\varphi}(n+\Delta\varphi)) \omega_\varphi^n =0.
\end{equation}
So
\begin{equation}
0\ge C\int_M e^{-A\varphi}(n+\Delta\varphi) - C\int_M e^{-A\varphi}. 
\end{equation}
This implies that
\begin{equation}
\int_M e^{-N\varphi} \Delta\varphi\le C\int_M e^{-N\varphi}
\end{equation}
for all large enough constant $N$ and a constant $C$ depending on $N$. Note that
\begin{equation}
\Delta e^{-N\varphi} = N^2 e^{-N\varphi} |\nabla\varphi|^2 -N e^{-N\varphi}\Delta\varphi.
\end{equation}
So
\begin{equation}
0 = \int_M N^2 e^{-N\varphi} |\nabla\varphi|^2 - \int_M N e^{-N\varphi}\Delta\varphi = 4 \int_M |e^{-N\varphi/2}\frac{N}{2}\nabla\varphi|^2 - \int_M N e^{-N\varphi}\Delta\varphi.
\end{equation}
This implies that
\begin{equation}
\int_M |\nabla e^{-N \varphi/2}|^2 \le \frac{N}{4} \int_{M} e^{-N\varphi}\Delta\varphi \le C\int_M e^{-N\varphi}.
\end{equation}
If there exists a large enough constant $N$ and a sequence $\varphi_i$ depending on $N$ such that $\int_M e^{-N\varphi_i}\to\infty$. Let $\tilde\varphi_i=\varphi_i-c_i$ such that $\int_M e^{-N\tilde\varphi_i}=1$. Then $||e^{-N\tilde\varphi_i/2}||_{W^{1,2}}\le C$. This implies that after taking a subsequence, $e^{-N\tilde\varphi_i/2}\to F$ in $L^2$-norm to a function $F$ such that $||F||_{L^2}=1$. However, for any $\lambda>0$,
\begin{equation}Vol(e^{-N\tilde\varphi_i}>\lambda) = Vol(-\varphi_i > \frac{\log \lambda}{N}+\frac{1}{N}\log\int_M e^{-N\varphi_i}) \le \frac{\int_M |\varphi_i|}{\frac{\log \lambda}{N}+\frac{1}{N}\log\int_M e^{-N\varphi_i}} \to 0.
\end{equation}
So $F=0$, this is a contradiction. Thus, we have proved that
$\int_M e^{-N\varphi}\le C$ for any large enough constant $N$ and a constant $C$ depending on $N$.

Recall that
\begin{equation}
-n\le \Delta\varphi \le -n + C e^{A(\varphi-\inf \varphi)} \le Ce^{-A\inf\varphi}.
\end{equation}
By Schauder's estimate,
\begin{equation}
\sup_M |\nabla \varphi| \le C \int_M |\varphi| + C e^{-A\inf\varphi} \le C_1 e^{-A\inf\varphi}.
\end{equation}
So on a ball with center achieving $\inf\varphi$ and radius $C_1^{-1} e^{A\inf\varphi} (-\inf \varphi)/2$, we have
\begin{equation}
\inf \varphi \le \varphi \le \inf\varphi/2.
\end{equation}
So
\begin{equation}
C\ge \int_M e^{-N\varphi} \ge C e^{-N\inf\varphi/2} (C^{-1}e^{A\inf\varphi})^n (-\inf\varphi)^n.
\end{equation}
If we choose $N>>A$, then $|\inf\varphi|\le C$. This implies that
\begin{equation}0\le n+\Delta\varphi \le C.\end{equation}

Now we consider the J-equation as in \cite{SongWeinkove} and \cite{Chen}. Yau's proof takes second derivatives of the equation. In our case, the twisted version is that
\begin{equation}
\omega_\varphi^{i\bar j}\chi_{i\bar j}+f\frac{\det \chi_{\alpha\bar\beta}}{\det\omega_{\varphi,\alpha\bar\beta}}=c.
\end{equation}
So
\begin{equation}
-\omega_\varphi^{i\bar l}(\omega_{\varphi,m\bar l k})\omega_\varphi^{m\bar j}\chi_{i\bar j}-f\frac{\det \chi_{\alpha\bar\beta}}{\det\omega_{\varphi,\alpha\bar\beta}}\omega_{\varphi}^{i\bar j}\omega_{\varphi,i\bar j k}=...
\end{equation}
Thus, if we define
\begin{equation}
\tilde\Delta u = \omega_\varphi^{i\bar l}u_{m\bar l}\omega_\varphi^{m\bar j}\chi_{i\bar j} + f \frac{\det \chi_{\alpha\bar\beta}}{\det\omega_{\varphi,\alpha\bar\beta}}\omega_{\varphi}^{i\bar j}u_{i\bar j},
\end{equation}
then we can write $\tilde\Delta(\nabla \varphi)$ as lower order terms. If we take one more derivative, then a long calculation in \cite{Chen} based on \cite{SongWeinkove} shows that
\begin{equation}\tilde\Delta (\log tr_\chi \omega_{\varphi}) \ge -C.
\end{equation}
Recall that Yau \cite{Yau} used $e^{-A\varphi}(n+\Delta\varphi) = e^{-A\varphi}tr_{\omega_0}\omega_\varphi$. In our case, note that $C^{-1}\omega_0 \le \chi \le C\omega_0$ and we use
\begin{equation}
\log (e^{-A\varphi}tr_\chi\omega_\varphi) = \log tr_\chi \omega_{\varphi}-A\varphi
\end{equation}
instead. We have
\begin{equation}
\tilde\Delta (\log tr_\chi \omega_{\varphi}-A\varphi) \ge -C-A\tilde\Delta\varphi.
\end{equation}
Thus, for any $\epsilon>0$, if we choose $A$ large enough depending on $\epsilon$, then at the maximal point of $\log tr_\chi \omega_{\varphi}-A\varphi$, $\tilde\Delta\varphi\ge -C/A=-\epsilon$.
Note that
\begin{equation}
\tilde\Delta \varphi = \omega_\varphi^{i\bar l}(\omega_{\varphi,m\bar l}-\omega_{0,m\bar l})\omega_\varphi^{m\bar j}\chi_{i\bar j} + f \frac{\det \chi_{\alpha\bar\beta}}{\det\omega_{\varphi,\alpha\bar\beta}}\omega_{\varphi}^{i\bar j}(\omega_{\varphi,i\bar j}-\omega_{0,i\bar j}).
\end{equation}
At the maximal of $\log tr_\chi \omega_{\varphi}-A\varphi$, we diagonalize $\omega_{0,i\bar j}=\delta_{i\bar j}$ and $\omega_{\varphi,i\bar j}=\mu_i\delta_{i\bar j}$. Then
\begin{equation}-\epsilon \le \sum_i \frac{\chi_{i\bar i}}{\mu_i} - \sum_i \frac{\chi_{i\bar i}}{\mu_i^2} + f \frac{\det \chi_{\alpha\bar\beta}}{\det\omega_{\varphi,\alpha\bar\beta}}\sum_i(1-\frac{1}{\mu_i}).
\end{equation}
We want to complete the square for $\sum_i \frac{\chi_{i\bar i}}{\mu_i} - \sum_i \frac{\chi_{i\bar i}}{\mu_i^2}$, so we see that
\begin{equation}\begin{split}
-\epsilon \le - \sum_i \frac{\chi_{i\bar i}}{\mu_i^2} + 2\sum_i \frac{\chi_{i\bar i}}{\mu_i} - \sum_i \frac{\chi_{i\bar i}}{\mu_i} + f \frac{\det \chi_{\alpha\bar\beta}}{\det\omega_{\varphi,\alpha\bar\beta}}\sum_i(1-\frac{1}{\mu_i})\\
=\sum_{i\not=k} (-\frac{\chi_{i\bar i}}{\mu_i^2} + 2\sum_i \frac{\chi_{i\bar i}}{\mu_i}) - \frac{\chi_{k\bar k}}{\mu_k^2} + 2\frac{\chi_{k\bar k}}{\mu_k} - \sum_i \frac{\chi_{i\bar i}}{\mu_i} + f \frac{\det \chi_{\alpha\bar\beta}}{\det\omega_{\varphi,\alpha\bar\beta}}\sum_i(1-\frac{1}{\mu_i})\\
\le \sum_{i\not=k} \chi_{i\bar i}- \frac{\chi_{k\bar k}}{\mu_k^2} + 2\frac{\chi_{k\bar k}}{\mu_k} - c + f \frac{\det \chi_{\alpha\bar\beta}}{\det\omega_{\varphi,\alpha\bar\beta}}(1+\sum_i(1-\frac{1}{\mu_i})).
\end{split}
\end{equation}
We have used the equation that
\begin{equation}
c=\sum_i \frac{\chi_{i\bar i}}{\mu_i} + f \frac{\det \chi_{\alpha\bar\beta}}{\det\omega_{\varphi,\alpha\bar\beta}}.
\end{equation}
The cone condition for $\omega_0$ implies that $\sum_{i\not=k} \chi_{i\bar i} < c$. Since $M$ is compact, we can choose $\epsilon$ such that $\sum_{i\not=k} \chi_{i\bar i}\le c-2\epsilon$.
Then
\begin{equation}
\epsilon 
\le - \frac{\chi_{k\bar k}}{\mu_k^2} + 2\frac{\chi_{k\bar k}}{\mu_k} + f \frac{\det \chi_{\alpha\bar\beta}}{\det\omega_{\varphi,\alpha\bar\beta}}(1+\sum_i(1-\frac{1}{\mu_i})).
\end{equation}

If
\begin{equation}
f \frac{\det \chi_{\alpha\bar\beta}}{\det\omega_{\varphi,\alpha\bar\beta}}(1+\sum_i(1-\frac{1}{\mu_i})) \ge \epsilon/2,
\end{equation}
since the cone condition for $\omega_\varphi$ implies that $\sum_{i\not=k} \frac{\chi_{i\bar i}}{\mu_i} < c$, we see that $\frac{1}{\mu_i}\le C$ for all $i$. So 
\begin{equation}
|1+\sum_i(1-\frac{1}{\mu_i})|\le C.
\end{equation}
We also know that $|f|\le C$, so $\frac{\det \chi_{\alpha\bar\beta}}{\det\omega_{\varphi,\alpha\bar\beta}}\ge 1/C$ for another constant $C$. Since the eigenvalue for $\chi^{i\bar j}\omega_{\varphi,k\bar j }$ is bounded from below by the cone condition on $\omega_\varphi$ and the product is bounded, we see that all the eigenvalues are bounded from above. This implies a bound on $tr_\chi\omega_\varphi$. 

If \begin{equation}
f \frac{\det \chi_{\alpha\bar\beta}}{\det\omega_{\varphi,\alpha\bar\beta}}(1+\sum_i(1-\frac{1}{\mu_i})) \le \epsilon/2,
\end{equation}
then \begin{equation}\epsilon/2\le - \frac{\chi_{k\bar k}}{\mu_k^2} + 2\frac{\chi_{k\bar k}}{\mu_k}. \end{equation} This also implies a bound on $\mu_k$ and therefore a bound on $tr_\chi\omega_\varphi$ at this point.

In conclusion, we have proved that for all large enough $A$, at the maximal point of $\log tr_\chi \omega_{\varphi}-A\varphi$, $tr_\chi \omega_{\varphi}\le C$. So $\log tr_\chi \omega_{\varphi}-A\varphi\le C-A\inf \varphi$ at this point and therefore at all points on $M$. Thus
\begin{equation}
n+\Delta\varphi \le C tr_\chi \omega_{\varphi}\le Ce^{A(\varphi-\inf\varphi)}.
\end{equation}
This estimate is similar to the estimate in Yau \cite{Yau}.

In the next step, recall that Yau \cite{Yau} used $e^f\omega_0^n=\omega_\varphi^n$. This is not true in this case. However,
\begin{equation}
\int_M |\nabla e^{-N\varphi/2}|^2 = \frac{N}{4} \int_M e^{-N\varphi}\Delta\varphi \le CN e^{-A\inf\varphi} \int_M e^{-N\varphi+A\varphi}
\end{equation}
is still true for any $N>0$. In particular, we can start with $N=A+\alpha$, where $\alpha$ is Tian's $\alpha$-invariant, which means that $\int_M e^{-\alpha\varphi}\le C$ for the normalization $\sup_M\varphi=-1$. We can choose a sequence of $N$ and use the Moser iteration to show that
\begin{equation}
e^{-A\inf\varphi} = \lim_{p\to\infty}||e^{-A\varphi}||_{L^p} \le C e^{-A\inf\varphi(1-\mu)}(\int_M e^{-\alpha\varphi})^{\mu A/\alpha}
\end{equation}
for a constant $\mu\in(0,1)$. This implies a bound on $e^{-A\inf \varphi}$, which means a bound on $||\varphi||_{C^0}$. Moreover, $0\le n+\Delta\varphi \le C$ for a constant $C$.

In Datar-Pingali's paper \cite{DatarPingali}, they used a different strategy, following the approach by Sz\'{e}kelyhidi \cite{Szekelyhidi}. Their $C^0$ estimate used the Alexandrov-Bakelmann-Pucci maximum principle.
\begin{theorem}[Alexandrov-Bakelmann-Pucci maximum principle]
Let $v$ be a smooth function on $B(1)\subset \mathbb{R}^m$ such that
\begin{equation}
v(0)+\epsilon\le \inf_{\partial B(1)}v
\end{equation}
for a constant $\epsilon>0$. Let $P$ be the set
\begin{equation}
\{x\in B(1), |Dv(x)|<\frac{\epsilon}{2}, v(y)\ge v(x)+Dv(x)\cdot(y-x), \forall y\in B(1)\},
\end{equation}
then there exists a constant $C>0$ such that
\begin{equation}
C\epsilon^{m} \le \int_P \det(D^2 v).
\end{equation} 
\end{theorem}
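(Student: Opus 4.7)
The strategy is the classical contact-set argument: exhibit a surjection from $P$ onto the ball $B(\epsilon/2) \subset \mathbb{R}^m$ via the gradient map $Dv$, and then invoke the area formula to bound the measure of $Dv(P)$ by an integral of $|\det D^2 v|$.

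First I would show that $Dv(P) \supset B(\epsilon/2)$. Given $p \in \mathbb{R}^m$ with $|p|<\epsilon/2$, consider $w(x) = v(x) - p\cdot x$. On $\partial B(1)$ one has $w(x) \ge \inf_{\partial B(1)} v - |p| \ge v(0) + \epsilon - \epsilon/2 = v(0)+\epsilon/2 > w(0)$, so $w$ attains its minimum at an interior point $x_0 \in B(1)$. At $x_0$ the equations $Dw(x_0)=0$ and $w(y)\ge w(x_0)$ for all $y\in B(1)$ give $Dv(x_0)=p$ (hence $|Dv(x_0)|<\epsilon/2$) together with $v(y) \ge v(x_0) + Dv(x_0)\cdot(y-x_0)$ for all $y \in B(1)$. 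So $x_0 \in P$ and $Dv(x_0)=p$, proving the inclusion.

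Next, on $P$ the very definition says that $v$ lies above its tangent plane at $x$ globally on $B(1)$, so in particular $x$ is a local minimum of $y\mapsto v(y)-Dv(x)\cdot y$; hence $D^2 v(x)\ge 0$ on $P$, and $\det D^2 v=|\det D^2 v|$ there. The area formula applied to the $C^1$ map $Dv:P\to\mathbb{R}^m$ then gives
\begin{equation*}
\omega_m (\epsilon/2)^m = |B(\epsilon/2)| \le |Dv(P)| \le \int_P |\det D(Dv)(x)|\,dx = \int_P \det D^2 v(x)\,dx,
\end{equation*}
where $\omega_m$ is the volume of the unit ball in $\mathbb{R}^m$. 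This is the desired inequality with $C=\omega_m/2^m$.

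The main step to be careful about is the covering claim $B(\epsilon/2)\subset Dv(P)$, since this is where the hypothesis $v(0)+\epsilon\le\inf_{\partial B(1)}v$ is actually used (it prevents the minimum of $w$ from escaping to the boundary). Once that inclusion is secured, the remaining content is the standard area formula for a smooth map together with the sign of $\det D^2 v$ on $P$, so no genuine analytic obstacle remains.
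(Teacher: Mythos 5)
The paper merely states the Alexandrov--Bakelmann--Pucci maximum principle and immediately uses it; it supplies no proof of its own, so there is nothing in the text to compare against. Your argument is the classical contact-set/gradient-map proof, and it is correct: the boundary hypothesis $v(0)+\epsilon\le\inf_{\partial B(1)}v$ forces, for every $|p|<\epsilon/2$, the minimum of $w(x)=v(x)-p\cdot x$ to be attained at an interior point $x_0$, which then satisfies $Dv(x_0)=p$, $|Dv(x_0)|<\epsilon/2$, and the global tangent-plane inequality, so $x_0\in P$ and hence $B(\epsilon/2)\subseteq Dv(P)$; on $P$ the tangent-plane condition yields $D^2v\ge 0$, so $\det D^2v=|\det D^2v|$, and the area (change-of-variables) formula for the $C^1$ map $Dv$ gives $\omega_m(\epsilon/2)^m=|B(\epsilon/2)|\le|Dv(P)|\le\int_P\det D^2v$, producing $C=\omega_m/2^m$. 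The only points worth making explicit in a polished write-up are (i) that $P$ is Lebesgue measurable---it is the intersection of the open set $\{|Dv|<\epsilon/2\}$ with the closed set $\bigcap_{y\in B(1)}\{x: v(y)\ge v(x)+Dv(x)\cdot(y-x)\}$---so the area formula applies, and (ii) that the inequality $|Dv(P)|\le\int_P|\det D^2v|$ is the ``$\ge 1$ multiplicity'' consequence of the area formula $\int_P|\det D^2v|=\int_{\mathbb{R}^m}\#\bigl(P\cap(Dv)^{-1}(y)\bigr)\,dy$, which you should cite rather than a naive change of variables since $Dv$ need not be injective on $P$.
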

In \cite{Szekelyhidi} and \cite{DatarPingali}, assume that $\sup\varphi=-1$, $\int_M|\varphi|\le C$, and $\inf\varphi=\varphi(0)$ in local coordinates. Define $v=\varphi+\epsilon|z|^2$ for a constant $\epsilon$ be to determined. If $x\in P$, then $D^2 v\ge 0$. So
\begin{equation}
\det(D^2 v)\le 2^{2n}\det(v_{i\bar j}).
\end{equation}
In the above, $D^2 v\in \mathbb{R}^{m\times m}$ for $m=2n$, and $v_{i\bar j}\in \mathbb{C}^{n\times n}$. The condition $D^2 v>0$ implies that $\varphi_{i\bar j}\ge -\epsilon \delta_{i\bar j}$. Think about a special case such that $\chi$, $\omega_0$, $\omega_\varphi$ are all diagonalizable, $\chi_{i\bar j}=\delta_{i\bar j}$, $\omega_{0, i\bar j}=\lambda_i\delta_{i\bar j}$, and $\omega_{\varphi, i\bar j}=\mu_i\delta_{i\bar j}$, then the cone condition for $\omega_0$ implies that
\begin{equation}
\sum_{i\not=k} \frac{1}{\lambda_i} < c-\epsilon'.
\end{equation} The condition $\varphi_{i\bar j}\ge -\epsilon\delta_{i\bar j}$ implies that $\mu_i\ge\lambda_i-\epsilon$. The J-equation is that $\sum_i \frac{1}{\mu_i}=c$, so
\begin{equation}
\frac{1}{\mu_k} = c-\sum_{i\not=k}\frac{1}{\mu_i} \ge c-\sum_{i\not=k}\frac{1}{\lambda_i-\epsilon}\ge \frac{\epsilon'}{2}
\end{equation}
if we choose $\epsilon$ small enough. This implies that $\mu_k\le C$ in this special case. In general, a slightly more complicated argument show that $|v_{i\bar j}|\le C$ on $P$. Now we apply the Alexandrov-Bakelmann-Pucci maximum principle to get
\begin{equation}
C\epsilon^{2n}\le \int_P \det(D^2 v) \le 2^{2n} \int_P \det(v_{i\bar j}) \le C Vol(P).
\end{equation}
Note note
\begin{equation}
v(0)\ge v(x)+Dv(x)\cdot(0-x) \ge v(x)-\frac{\epsilon}{2}
\end{equation}
for all $x\in P$. So
\begin{equation}
\varphi(x)\le \inf\varphi + \frac{\epsilon}{2}
\end{equation}
for all $x\in P$. This implies that
\begin{equation}
C\epsilon^{2n}\le CVol(P) \le \frac{C\int_P |\varphi|}{|\inf\varphi + \frac{\epsilon}{2}|} \le \frac{C}{|\inf\varphi + \frac{\epsilon}{2}|}
\end{equation}
So $|\inf\varphi|\le C$. In other words, we have proved that $||\varphi||_{C^0}\le C$.

Then we consider the $C^2$-estimate. For the Monge-Amp\`ere equation, we used $e^{-A\varphi}tr_{\omega_0}\omega_\varphi$. For the J-equation, we used $-A\varphi+\log (tr_{\chi}\omega_\varphi)$. In \cite{Szekelyhidi} and \cite{DatarPingali}, they used the function
\begin{equation}
G = \log \lambda_1 + \phi(|\nabla\varphi|^2) +\psi(\varphi),
\end{equation}
where $\lambda_1$ is the largest eigenvalue of $\chi^{i\bar j}\omega_{\varphi, k\bar j}$,
\begin{equation}
\phi(t)=-\frac{1}{2} \log(1-\frac{t}{2K}),
\end{equation}
\begin{equation}
K=\sup|\nabla\varphi|^2+1,
\end{equation}
and
\begin{equation}
\psi(t)=-2At+\frac{A\tau}{2}t^2
\end{equation}
for parameters $A,\tau$ determined in the proof. The calculation is more complicated, but they manage to prove that 
at the maximal point of $G$, $\lambda_1\le CK$. Then $\max G\le C+\log(K)$ by the $C^0$-bound on $\varphi$ as well as the bounds on $\psi$. This then implies that 
\begin{equation}
\log \lambda_1\le C+\log(K)
\end{equation}
on $M$, which is equivalent to
\begin{equation}
|\varphi_{i\bar j}|\le C+C\lambda_1 \le C+CK \le C(1+\sup|\nabla\varphi|^2).
\end{equation}
We will then run the contradiction argument with this bound as in Collins, Jacob, and Yau's paper \cite{CollinsJacobYau}.

If $K_i=\sup|\nabla\varphi|^2\to\infty$, then we define
\begin{equation}
\tilde\varphi_i(z)=\varphi_i(\frac{z}{K_i}).
\end{equation}
Then $|\nabla\tilde\varphi_i|\le 1$ and $|\nabla\tilde\varphi_i(0)|= \sqrt{\frac{K_i-1}{K_i}}$. The condition $i\partial\bar\partial\varphi_i\ge-\omega_0$ implies that $i\partial\bar\partial\tilde\varphi_i\ge-\frac{\omega_0}{K_i^2}$. We also have that $|\tilde\varphi|\le C$ and $|\partial\bar\partial\tilde\varphi|\le C$. The $C^0$ bound, $C^1$ bound and a bound on $|\partial\bar\partial\tilde\varphi|$ implies that a subsequence of $\tilde\varphi_i$ converges to $\varphi_\infty$ in $C^{1,\alpha}_{loc}$. Then we need to take the limit of $i\partial\bar\partial\tilde\varphi_i\ge-\frac{\omega_0}{K_i^2}$ to say that $i\partial\bar\partial\varphi_\infty\ge 0$ in some weak sense and then try to get a contradiction. There are several ways to define the weak sense.
\begin{definition}
If $\varphi$ is an $L^1$-function on $B(1)$, then we define $i\partial\bar\partial\varphi\ge 0$ in the sense of distribution if and only if for all strongly positive $(n-1,n-1)$ smooth form $\xi$, 
\begin{equation}
\int_M i\partial\bar\partial\varphi\wedge\xi = \int_M \varphi\wedge i\partial\bar\partial\xi \ge 0.
\end{equation}
\end{definition}
\begin{definition}
We choose a cut-off function $\rho\ge 0$ with $supp \rho = B(1)$, $\int_{\mathbb{C}^n} \rho=1$ and define
\begin{equation}\rho_\epsilon(z)=\frac{1}{\epsilon^{2n}}\rho(\frac{z}{\epsilon}).
\end{equation}
If $\varphi$ is an $L^1$-function on $B(1)\subset\mathbb{C}^n$, then we define the smoothing $\varphi_\epsilon$ of $\varphi$ by 
\begin{equation}
\varphi_\epsilon(x) = \int_M \varphi(y) \rho_\epsilon(x-y)dy.
\end{equation}
Then $\varphi_\epsilon\to \varphi$ in $L^1$. We say that $i\partial\bar\partial\varphi\ge 0$ in the sense of smoothing if and only if $i\partial\bar\partial \rho_\epsilon\ge 0$ for all $\epsilon>0$.
\end{definition}
\begin{proposition}
If $i\partial\bar\partial\varphi\ge 0$ in the sense of distribution if and only if $i\partial\bar\partial\varphi\ge 0$ in the sense of smoothing.
\end{proposition}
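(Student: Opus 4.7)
The plan is to prove the two implications separately, using $\varphi_\epsilon \to \varphi$ in $L^1_{\mathrm{loc}}$ as the bridge, integration by parts, and the fact (noted in the excerpt) that for $(1,1)$-forms positivity coincides with strong positivity, so that testing against strongly positive $(n-1,n-1)$-forms detects positivity pointwise.

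For the easier direction (smoothing $\Rightarrow$ distribution), suppose $i\partial\bar\partial\varphi_\epsilon \ge 0$ pointwise as a smooth $(1,1)$-form for every $\epsilon>0$. Given any smooth strongly positive $(n-1,n-1)$-form $\xi$ with compact support in $B(1)$, pointwise positivity gives $\int i\partial\bar\partial\varphi_\epsilon\wedge\xi \ge 0$ because a positive $(1,1)$-form wedged with a strongly positive $(n-1,n-1)$-form is a nonnegative top form. Since $\varphi_\epsilon$ is smooth and $\xi$ is compactly supported, integration by parts rewrites this as $\int \varphi_\epsilon\, i\partial\bar\partial\xi \ge 0$, and sending $\epsilon\to 0$ (using $\varphi_\epsilon\to\varphi$ in $L^1_{\mathrm{loc}}$ and boundedness of $i\partial\bar\partial\xi$) yields the distributional inequality.

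For the harder direction (distribution $\Rightarrow$ smoothing), fix an interior point $x_0 \in B(1)$ with $d(x_0, \partial B(1)) > \epsilon$. To show $(i\partial\bar\partial\varphi_\epsilon)(x_0) \ge 0$ as a $(1,1)$-form, it suffices to show $(i\partial\bar\partial\varphi_\epsilon)(x_0)\wedge \alpha \ge 0$ for every strongly positive constant-coefficient $(n-1,n-1)$-form $\alpha$. The key construction is to fold $\alpha$ and the mollifier into a single test form
\[
\xi(y) \;:=\; \rho_\epsilon(x_0 - y)\,\alpha,
\]
which is smooth, supported in $B(x_0,\epsilon)\subset B(1)$, and strongly positive (since $\rho_\epsilon \ge 0$ and $\alpha$ is a nonnegative combination of decomposable strongly positive forms with constant coefficients). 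Because $\alpha$ has constant coefficients, all derivatives fall on $\rho_\epsilon$ and $i\partial\bar\partial\xi(y) = (i\partial\bar\partial\rho_\epsilon)(x_0 - y) \wedge \alpha$, where the identity under the translation $y\mapsto x_0-y$ holds because $\partial\bar\partial$ is of even order. Applying the distributional hypothesis to $\xi$ and differentiating the convolution under the integral,
\[
0 \;\le\; \int \varphi(y)\, i\partial\bar\partial\xi(y) \;=\; \Big(\int \varphi(y)\,(i\partial\bar\partial\rho_\epsilon)(x_0-y)\,dy\Big)\wedge\alpha \;=\; (i\partial\bar\partial\varphi_\epsilon)(x_0)\wedge\alpha.
\]
Since $\alpha$ and $x_0$ were arbitrary, $i\partial\bar\partial\varphi_\epsilon \ge 0$ wherever it is defined.

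The only real obstacle is the bookkeeping of positivity and signs: verifying carefully that $\rho_\epsilon\alpha$ is genuinely strongly positive (immediate from the definition, as $\rho_\epsilon$ is a nonnegative scalar), that the translation $y \mapsto x_0 - y$ introduces no sign in $i\partial\bar\partial$, and that the formal integration by parts is justified by compact support of $\xi$ inside $B(1)$. As a byproduct, the construction shows that the smoothing notion is independent of the particular choice of mollifier $\rho$.
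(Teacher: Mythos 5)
Your proof is correct and uses essentially the same ingredients as the paper's: the mollifier convolution, integration by parts against compactly supported strongly positive $(n-1,n-1)$-forms, and the pointwise duality between positive $(1,1)$-forms and strongly positive $(n-1,n-1)$-forms. The only variation is in the harder direction: you fold $\rho_\epsilon$ into the test form, taking $\xi(y)=\rho_\epsilon(x_0-y)\,\alpha$ so that the distributional inequality directly yields pointwise positivity of $i\partial\bar\partial\varphi_\epsilon$ at $x_0$, whereas the paper keeps a general strongly positive $\xi$, Fubinis so that $\rho_\epsilon$ averages the distributional pairing over translates of $\varphi$, and then invokes pointwise duality at the end; these are two rearrangements of the same convolution identity and prove the same statement.
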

\begin{proof}
$i\partial\bar\partial\varphi\ge 0$ in the sense of smoothing, then for any strongly positive $(n-1,n-1)$ smooth form $\xi$, 
\begin{equation}
\int_M i\partial\bar\partial\varphi\wedge\xi = \int_M \varphi\wedge i\partial\bar\partial\xi =\lim_{\epsilon \to 0} \int_M \varphi_\epsilon \wedge i\partial\bar\partial\xi =\lim_{\epsilon \to 0}\int_M i\partial\bar\partial\varphi_\epsilon\wedge\xi \ge 0.
\end{equation}
Conversely, if $i\partial\bar\partial\varphi\ge 0$ in the sense of distribution, then
\begin{equation}
\int_M i\partial_x\bar\partial_x\varphi_\epsilon(x)\wedge\xi(x) = \int_M \varphi_\epsilon(x)\wedge i\partial_x\bar\partial_x\xi(x) = \int_M (\int_M \varphi(x-y)\wedge i\partial_x\bar\partial_x\xi(x))\rho_\epsilon(y)dy\ge 0.
\end{equation}
By the pointwise duality of strongly positive and positive forms, we see that $i\partial\bar\partial\varphi_\epsilon\ge 0$.
\end{proof}
Back to the contradiction argument, it is easy to see that the $C^{1,\alpha}_{loc}$ converges implies that $i\partial\bar\partial\varphi_\infty\ge 0$ in the sense of smoothing. Therefore, for any $\epsilon>0$, $i\partial\bar\partial(\varphi_\infty)_\epsilon\ge 0$ for a smooth function $(\varphi_\infty)_\epsilon$ such that $|(\varphi_\infty)_\epsilon|\le C$, $|\nabla(\varphi_\infty)_\epsilon|\le C$. The Liouville's theorem implies that $(\varphi_\infty)_\epsilon$ must be a constant for any $\epsilon>0$. So $\varphi_\infty$ is also a constant. This contradicts the condition that $|\nabla\tilde\varphi_i(0)|= \sqrt{\frac{K_i-1}{K_i}}\to 1$ and $\tilde\varphi_i\to\varphi_\infty$ in $C^{1,\alpha}_{loc}$. In other words, the contradiction argument implies a bound on $K$ and therefore, bounds on $|\varphi|$ and $|i\partial\bar\partial\varphi|$.

\subsection{Higher derivative bound}
In all of the above cases, we have proved bounds on $|\varphi|$ and $|i\partial\bar\partial\varphi|$. Standard elliptic regularity implies bounds on $|\nabla\varphi|$.

For the real Monge-Amp\`ere equation $\det u_{ij}=1$ on $\mathbb{R}^n$, Calabi proved the $C^3$-estimate \cite{CalabiC3}. For the complex Monge-Amp\`ere equation
\begin{equation}\omega_\varphi^n=e^f\omega_0^n,
\end{equation}
Yau \cite{Yau} generalized Calabi's $C^3$-estimate \cite{CalabiC3} by defining
\begin{equation}
S=\omega^{i\bar j}_{\varphi}\omega^{k\bar l}_{\varphi}\omega^{m\bar n}_{\varphi}\varphi_{i\bar lm}\varphi_{\bar jk\bar n}.
\end{equation}
Yau proved that
\begin{equation}
\Delta_{\varphi}(S+A\Delta\varphi)\ge CS-C
\end{equation}
for a constant $A$. At the maximal point of $S+A\Delta\varphi$, $S\le C$. So $S+A\Delta\varphi \le C$ at this point and also other points because of the bound on $\Delta\varphi$. This implies a bound on $S$. So $[\varphi_{i\bar j}]_{C^\alpha}\le C$. Higher derivative bounds then follow Scauder's estimate.

In general, we use Evans-Krylov's estimate to get $[\varphi_{i\bar j}]_{C^\alpha}\le C$ and then apply Scauder's estimate to get higher derivative bounds. Evans-Krylov's estimate for the equation $\Phi(\varphi_{i\bar j})=0$ requires $|\varphi|\le C$, $|\nabla \varphi|\le C$, $|\varphi_{i\bar j}|\le C$, the uniform ellipticity
\begin{equation}
C^{-1} \le \frac{\partial\Phi}{\partial A_{i\bar j}}\le C,
\end{equation}
and concavity
\begin{equation}
\frac{\partial^2\Phi}{\partial A_{i\bar j}\partial A_{k\bar l}}\le 0,
\end{equation} which is equivalent to
\begin{equation}
\Phi(tB+(1-t)A)\ge t\Phi(B)+(1-t)\Phi(A).
\end{equation}
For the Monge-Amp\`ere equation, 
\begin{equation}
\Phi=\log (\lambda_1...\lambda_n),
\end{equation}
so
\begin{equation}
\frac{\partial\Phi}{\partial \lambda_i}=\frac{1}{\lambda_i},
\end{equation}
and
\begin{equation}
\frac{\partial^2\Phi}{\partial \lambda_i\lambda_j}=-\frac{1}{\lambda_i^2}\delta_{ij},
\end{equation}
The derivatives on matrices involve a chain rule, but the required properties are true. For the J-equation,
\begin{equation}
\Phi=-\sum_{i}\frac{1}{\lambda_i},
\end{equation}
so
\begin{equation}
\frac{\partial\Phi}{\partial \lambda_i}=\frac{1}{\lambda_i^2},
\end{equation}
and
\begin{equation}
\frac{\partial^2\Phi}{\partial \lambda_i\lambda_j}=-\frac{2}{\lambda_i^3}\delta_{ij},
\end{equation}
The condition can be verified similarly, even with the twisted term $\frac{f}{\prod_i\lambda_i}$. The twisted equation in \cite{DatarPingali} is more complicated but is similar.

\section{Existence theorems and subvarieties}
Now we start to understand Demailly-Paun's theorem \cite{DemaillyPaun} to see how subvarieties come in. We start with several definitions.
\begin{definition}
If $\omega_t>0$ are smooth K\"ahler forms and $[\omega_t]\to[\omega_0]$, then we say that $[\omega_0]$ is a nef class.
\end{definition}
\begin{definition}
We $[\omega_0]$ big if and only if $\int_M\omega_0^n>0$.
\end{definition}
It suffices to prove that any nef and big class $[\omega_0]$ is a K\"ahler class if $\int_{V^p}\omega_0^p\ge 0$ for all analytic subvariety $V^p\subset M^n$. If we just take the weak limit of $\omega_t$, then $[\omega_0]$ contains a positive current (in the sense of distribution). A key proposition is the following:
\begin{proposition}[\cite{DemaillyPaun}]
Any nef and big class $[\omega_0]$ contains a K\"ahler current $\omega_\varphi=\omega_0+i\partial\bar\partial\varphi$ for an $L^1$ potential $\varphi$. If $\chi$ is a background K\"ahler form, then we say $\omega_\varphi$ a K\"ahler current if there exists $\epsilon>0$ such that $\omega_\varphi-\epsilon\chi$ is still a positive current.
\end{proposition}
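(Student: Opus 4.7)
The plan is to build $\omega_\varphi$ as the weak limit, as $t\downarrow 0$, of Yau's Calabi--Yau metrics on the perturbed Kähler classes $[\omega_0+t\chi]$, and then to upgrade the limiting closed positive current to a Kähler current by exploiting the lower bound on the Monge--Ampère mass coming from bigness.

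First, since $[\omega_0]$ is nef and $[\chi]$ is Kähler, convexity of the Kähler cone gives that $[\omega_t]:=[\omega_0+t\chi]$ is Kähler for every $t>0$, and I fix a smooth Kähler representative $\omega_t$ in it. Since $[\omega_0]$ is big, $c_t:=\int_M\omega_t^n/\int_M\chi^n$ is uniformly bounded below by some $c_0/2>0$ for all small $t$. Yau's theorem then produces a smooth solution of
$$\bigl(\omega_t+i\partial\bar\partial\psi_t\bigr)^n = c_t\,\chi^n,$$
with $\omega_{t,\psi}:=\omega_t+i\partial\bar\partial\psi_t>0$, normalized by $\sup_M\psi_t=0$. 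The Green's-function argument of Section 2.2 supplies a uniform bound $\|\psi_t\|_{L^1}\le C$ independent of $t$. By weak compactness of $\omega_0$-plurisubharmonic functions with bounded $L^1$ norm, a subsequence satisfies $\psi_t\to\psi_\infty$ in $L^1(M)$, and the limit $T:=\omega_0+i\partial\bar\partial\psi_\infty$ is a closed positive $(1,1)$-current in $[\omega_0]$ with $L^1$ potential.

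The genuine obstacle is the final step: producing $\varepsilon>0$ such that $T-\varepsilon\chi$ is still a positive current. The elementary volume bound $\omega_{t,\psi}^n\ge(c_0/2)\chi^n$ does not, on its own, imply any $(1,1)$-form inequality between $\omega_{t,\psi}$ and $\chi$, since the eigenvalues of $\omega_{t,\psi}$ with respect to $\chi$ can be arbitrarily unbalanced pointwise. To circumvent this I would concentrate the Monge--Ampère mass: replace the right-hand side $c_t\,\chi^n$ by $c_t\,f_\delta\,\chi^n$ for a smooth bump $f_\delta$ of total $\chi^n$-mass $1$ supported in a $\delta$-neighborhood of a chosen point $x_0\in M$, solve the corresponding Yau equation, and send $t\downarrow 0$ followed by $\delta\downarrow 0$. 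This forces the limiting potential to develop an analytic singularity of definite Lelong number at $x_0$. Subtracting off this singularity leaves a strictly positive residual mass globally, and a pluripotential redistribution argument --- combining Demailly's regularization of closed positive currents with the $\alpha$-invariant-type bound $\int_M e^{-N\varphi}\le C$ already established in Section 2.3 --- converts this residual mass into the pointwise current inequality $T\ge\varepsilon\chi$ characteristic of a Kähler current.
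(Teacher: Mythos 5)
There is a genuine gap in the final step, and it is exactly the step where the real content of the proposition lives. You correctly observe that the bound $\omega_{t,\psi}^n\ge(c_0/2)\chi^n$ gives no pointwise $(1,1)$-inequality, and you propose to fix this by concentrating the Monge--Amp\`ere measure at a single point $x_0$, extracting a Lelong number there, and then invoking a ``pluripotential redistribution argument'' to upgrade the residual positive mass to $T\ge\varepsilon\chi$. That last step does not exist. Concentrating mass at $x_0$ produces a current that is \emph{more} singular at $x_0$ but tells you nothing about positivity away from $x_0$: subtracting the $x_0$-singularity leaves a closed positive current with some total mass, and a closed positive current of fixed mass can perfectly well degenerate (have zero eigenvalues) on an open set, so no redistribution of mass can force it to dominate $\varepsilon\chi$. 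Lelong numbers and masses are integral invariants; a $(1,1)$-current lower bound is a pointwise statement, and there is no general mechanism converting one into the other.

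The idea you are missing is to run your concentration argument not at one point but \emph{simultaneously at every point of $M$}, and the device Demailly--Paun use for this is the diagonal $D\subset M\times M$. On $M\times M$ one builds a form $\chi_{\epsilon,\delta}$ whose top power concentrates along $D$ as $\delta\to 0$, solves Yau's equation with right-hand side $c_t\chi_{\epsilon,\delta}^n$ to get $\omega_{t,\epsilon,\delta}$ in the class $[\pi_1^*\omega_0]+[\pi_2^*\omega_0]+O(t)$, and then shows (by an eigenvalue slicing argument on the sets where $\lambda_1\cdots\lambda_p$ is large or small) that $\omega_{t,\epsilon,\delta}^n$ still carries a definite mass $\epsilon''[D]$ along the diagonal. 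The crucial point is that $[D]$, pushed forward by $(\pi_1)_*(\,\cdot\,\wedge\pi_2^*\chi)$, becomes a strictly positive smooth form on $M$ (essentially $\chi$ itself), while the remainder of $\omega_{t,\epsilon,\delta}^n\wedge\pi_2^*\chi$ pushes forward to a closed positive current. Thus
\begin{equation}
\lim_{t\to 0}\lim_{\delta\to 0}(\pi_1)_*\bigl(\omega_{t,\epsilon,\delta}^n\wedge\pi_2^*\chi\bigr)\ \ge\ \varepsilon\chi
\end{equation}
as currents on $M$, and this pushforward lies in a fixed multiple of the class $[\omega_0]$ up to controlled errors. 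Your proposal shares the first two moves with the paper (Yau's solution on nef perturbations, $L^1$ compactness of the normalized potentials), but without the diagonal trick there is no way to pass from ``$T$ is a closed positive current of big mass'' to ``$T\ge\varepsilon\chi$'', and the step you have left as a black box is precisely where the argument must change.
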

\begin{proof}
The key point is to get the extra $\epsilon$ using the nef and big condition. They used Yau's solution to the Calabi conjecture in this process. Firstly assume that $V^p\subset M^n$ is a smooth subvariety locally given by $z_1=...=z_{n-p}=0$.
Then we define
\begin{equation}
\chi_{\epsilon,\delta}=\chi+\epsilon \cdot i\partial\bar\partial \log(\sum_{i}|z_i^2|+\delta)\ge\frac{1}{2}\chi
\end{equation}
locally and then glue them together. If we fix a sufficiently small $\epsilon$ and let $\delta\to 0$, then because $(i\partial\bar\partial \log(\sum_{i=1}^{n-p}|z_i^2|))^{n-p}$ is a constant multiple of the Dirac delta function at 0 on $\mathbb{C}^{n-p}$, we can get
\begin{equation}
\lim_{\delta\to0}\chi^{n-p}_{\epsilon,\delta}|_V=\epsilon'[V]
\end{equation}
for $\epsilon'>0$. In other words, if $T_\mu(V)$ is the tubular neighborhood of $V$ with radius $\mu$, then
\begin{equation}
\lim_{\mu\to0}\lim_{\delta\to0}\int_{T_\mu(V)}\chi^{n-p}_{\epsilon,\delta}\wedge\xi=\epsilon'\int_V\xi
\end{equation}
for all $(p,p)$-form $\xi$. Then we use the nef condition $\omega_t>0$, the big condition 
\begin{equation}
\lim_{t\to0}\int_M\omega_t^n\to\int_M\omega_0^n>0,
\end{equation}
and Yau's solution \cite{Yau} to the Calabi conjecture to get a smooth K\"ahler form $\omega_{t,\epsilon,\delta}=\omega_t+i\partial\bar\partial\varphi_{t,\epsilon,\delta}$ such that
\begin{equation}
\omega_{t,\epsilon,\delta}^n=c_t \chi_{\epsilon,\delta}^n
\end{equation}
for a constant $c_t$ such that
\begin{equation}
\int_M\omega_t^n=c_t \int_M\chi^n.
\end{equation}
Then we should also expect that
\begin{equation}
\lim_{\delta\to0}\omega^{n-p}_{t,\epsilon,\delta}|_V=\epsilon''[V]
\end{equation}
for another constant $\epsilon''>0$. To see this, let $\lambda_1>...>\lambda_n$ be eigenvalues of $\chi_{\epsilon,\delta}^{i\bar k}\omega_{t,\epsilon,\delta,j\bar k}$ and $E$ be the set $\lambda_1...\lambda_p \ge \frac{1}{\delta'}$. Then
\begin{equation}
C\ge \int_M\omega_{t,\epsilon,\delta}^p\wedge\chi_{\epsilon,\delta}^{n-p}\ge \int_E \lambda_1...\lambda_p\chi_{\epsilon,\delta}^{n} \ge \frac{1}{\delta'}\int_E \chi_{\epsilon,\delta}^{n}.
\end{equation}
So 
\begin{equation}
\int_E \chi_{\epsilon,\delta}^{n-p}\wedge\chi^p \le 2^p \int_E \chi_{\epsilon,\delta}^{n}\le C \delta'\to 0
\end{equation}
as $\delta'\to 0$. If we choose $\delta'$ small enough and use the condition that \begin{equation}
\lim_{\delta\to0}\chi^{n-p}_{\epsilon,\delta}|_V=\epsilon'[V],
\end{equation}
then the limit
\begin{equation}
\lim_{\mu\to0}\lim_{\delta\to0}\int_{T_\mu(V)\cap E^c}\chi^{n-p}_{\epsilon,\delta}\wedge \chi^p \ge C\epsilon'.
\end{equation}
On the set $E^c$, we have $\lambda_1...\lambda_p < \frac{1}{\delta'}$, which implies that $\lambda_{p+1}...\lambda_n>C\delta'$. So
\begin{equation}
\lim_{\mu\to0}\lim_{\delta\to0}\int_{T_\mu(V)\cap E^c}\omega^{n-p}_{t,\epsilon,\delta}\wedge \chi^p \ge C\delta' \lim_{\mu\to0}\lim_{\delta\to0}\int_{T_\mu(V)\cap E^c}\chi^{n-p}_{\epsilon,\delta}\wedge \chi^p \ge C\epsilon'^2.
\end{equation}
This implies the required estimate 
\begin{equation}
\lim_{\delta\to0}\omega^{n-p}_{t,\epsilon,\delta}|_V=\epsilon''[V].
\end{equation}
by the Skoda-El Mir extension theorem and support theorems. See \cite{Demailly} for more details.

Now we apply this argument to $M\times M$ and $V$ be the diagonal $D$. Then \begin{equation}
\lim_{\delta\to0}\omega^{n}_{t,\epsilon,\delta}|_D=\epsilon''[D].
\end{equation}
and $\omega_{t,\epsilon,\delta}>0$. If we consider the current
\begin{equation}
\lim_{t\to 0}\lim_{\delta\to0}(\pi_1)_* (\omega_{t,\epsilon,\delta}^n\wedge\pi_2^*\chi),
\end{equation}
then the term $\epsilon''[D]$ provides the extra $\epsilon'\chi$ and the remaining part is still a positive current. This provides the required K\"ahler current.
\end{proof}

Once we have the K\"ahler current, then we can use the Bergmann approximation. Roughly speaking, we define
\begin{equation}
||f||_{L^2_{k\varphi}}=\int |f|^2 e^{-2k\varphi}
\end{equation}
for a large enough integer $k$. The condition that $\omega_\varphi$ is a K\"ahler current in some sense implies a curvature condition and therefore, a vanishing theorem for the obstruction to extending local holomorphic functions. This implies that
\begin{equation}
\frac{1}{2k} i\partial\bar\partial \log(\sum_{i} |f_i|^2)
\end{equation}
is an approximation to $i\partial\bar\partial \varphi$, where $f_i$ is an orthonormal basis of holomorphic functions using the $L^2_{k\varphi}$ norm. The actual statement involves a gluing argument and the additional $\epsilon\chi$ cancels the error terms. Remark that the Bergman approximation is still not a smooth form because $V=\{f_i=0\}$ may not be empty. However, $V$ is a subvariety with $dim V<dim M$. Recall that the condition is that $\int_{Y^p} \omega_0^p>0$ for all subvariety $Y^p\subset M^n$. This condition is also satisfied by $[\omega_0]|_V$. Thus, if $V$ is smooth and if we use the induction on $dim M$, then there exists a smooth K\"ahler form 
\begin{equation}
\omega_V=\omega_0+i\partial\bar\partial \varphi_V
\end{equation} near $V$. We can take 
\begin{equation}\omega_0+i\partial\bar\partial \tilde\max\{\varphi_V,\frac{1}{2k} i\partial\bar\partial \log(\sum_{i} |f_i|^2)\}\end{equation} to get the required smooth K\"ahler form on $M$, where 
\begin{equation}
\tilde\max\{f,g\}=\int_{-\infty}^{\infty} \max\{f-t,g\} \rho_{\epsilon}(t) dt
\end{equation} is the regularized maximum function for a smooth function $\rho_{\epsilon}$ supported on $[-\epsilon,\epsilon]$. Remark that $\tilde\max\{f,g\}=f$ if $f>g+\epsilon$, $\tilde\max\{f,g\}=g$ if $f<g-\epsilon$, and $\tilde\max\{f,g\}$ is smooth if $f$ and $g$ are smooth. Finally, if $V$ is not smooth, we use Hironaka's resolution of singularity to get smooth submanifolds.

We have sketched the results in Demailly-Paun's paper \cite{DemaillyPaun}. Then we consider the J-equation as in \cite{Chen}. The first problem is that we can not talk about ``positive" current in the sense of distribution, because
\begin{equation}
c\omega_\varphi^{n-1}-(n-1)\omega_\varphi^{n-2}\wedge\chi
\end{equation}
is not well-defined for unbounded $\varphi$. To solve this problem, we define the cone condition
\begin{equation}
c\omega_\varphi^{n-1}-(n-1)\omega_\varphi^{n-2}\wedge\chi>0
\end{equation}
in the sense of smoothing. With this definition, we can use a similar argument to concentrate the mass on the diagonal of $M\times M$ and then push down the limit current. It can be proved that if $\omega_t$ are smooth forms that satisfy the cone condition for all $t>0$ and $[\omega_t]\to[\omega_0]$, then there exists $\epsilon>0$ such that $\omega_0-\epsilon\chi$ contains a current which satisfies the cone condition in the sense of smoothing.

For the generalized Monge-Amp\`ere equation, however, this argument fails. In fact,
\begin{equation}
\log(\lambda_1...\lambda_{2n}) = \log(\lambda_1...\lambda_{n})+\log(\lambda_{n+1}...\lambda_{2n}),
\end{equation}
and
\begin{equation}
\sum_{i=1}^{2n}\frac{1}{\lambda_i} = \sum_{i=1}^{n}\frac{1}{\lambda_i} + \sum_{i=n+1}^{2n}\frac{1}{\lambda_i},
\end{equation}
but
\begin{equation}
\sigma_k(\{\frac{1}{\lambda_i}\}|_{i=1}^{2n}) \not= \sigma_k(\{\frac{1}{\lambda_i}\}|_{i=1}^{n})+\sigma_k(\{\frac{1}{\lambda_i}\}|_{i=n+1}^{2n}).
\end{equation}
So the two components of $M\times M$ do not behave well for the generalized Monge-Amp\`ere equation. To solve this issue, Datar and Pingali \cite{DatarPingali} use the projective condition. They concentrate the mass on a very ample divisor to get the additional $\epsilon\chi$.

The next issue is that the Bergmann approximation only works for positive/K\"ahler currents. To solve this issue, the author \cite{Chen} used local smoothing to approximate currents by smooth functions. For example, suppose that $M=T^{2n}=\mathbb{C}^n/\mathbb{Z}^{2n}$, and 
\begin{equation}
\chi=i d z^i\wedge d\bar z^i.
\end{equation}
Then $M=\cup_j B(x_j,r)$. Suppose that $f_j$ are local smooth functions defined on $B(x_j,2r)$ such that
\begin{equation}
|f_j-f_{j'}|<\frac{r^2}{100}
\end{equation}
on $B(x_j,2r)\cap B(x_{j'},2r)$. We define
\begin{equation}
f=\tilde\max\{f_j-|z-x_j|^2\}.
\end{equation}
Then locally, $i\partial\bar\partial f$ is close to
\begin{equation}
i\partial\bar\partial f_j- i\partial\bar\partial f_j|z-x_j|^2 = i\partial\bar\partial f_j-\chi.
\end{equation}
Even though we change the class by $\chi$, the key point is that $f$ must be smooth. In fact, the only issue for the non-smoothness is when the regularized maximum involves $f_j- |z-x_j|^2$ but $z\in \partial B(x_j,2r)$. This can not happen if we choose the constant $\epsilon$ in the definition of regularized maximum to be smaller than $r^2$ because there exists $j'$ such that $z\in B(x_{j'},r)$, and therefore
\begin{equation}
f_j-|z-x_j|^2 = f_j-4r^2 \le f_{j'} +\frac{r^2}{100}-4r^2< f_{j'}-|z-x_{j'}|^2 -2r^2.
\end{equation}
This kind of argument implies that we can glue the local potentials on $M$ by changing the cohomology class a little bit if the local potentials are close to each other. This can be viewed as a generalization of B{\l}ocki and Ko{\l}odziej's argument \cite{BlockiKolodziej}.

The next question is: when are the smoothings of $\varphi$ in different local coordinates close to each other? The answer is that this is true if the Lelong number $\nu(x)$ is small. A Theorem by Siu \cite{Siu} shows that for any $\epsilon>0$, the set $V_\epsilon=\{\nu(x)>\epsilon\}$ is an analytic subvariety. Then we can use the condition on the integral on subvarieties, the induction on $dim M$ and Hironaka's resolution of singularity to get a smooth potential function near $V_\epsilon$ satisfying the cone condition. Then we glue this function with the smoothing of $\varphi$ in local coordinates to get a smooth potential function satisfying the cone condition. This process requires the additional $\epsilon\chi$ obtained in the previous step. Remark that there is a technical issue in this step which makes the results in \cite{Chen} slightly weaker. This technical issue has been solved by Song \cite{Song}, Datar, and Pingali \cite{DatarPingali}.

Finally, as in \cite{BlockiKolodziej}, let us understand the Lelong number in more detail. If $\varphi$ is a plurisubharmonic on $\mathbb{C}^n$ (i.e. $i\partial\bar\partial\varphi\ge 0$ in the sense of distribution or equivalently, in the sense of smoothing), then we define
\begin{equation}
\hat\varphi_\delta(x)=\max_{B_\delta(x)}\varphi.
\end{equation}
This is a convex function in $\log\delta$. So the function
\begin{equation}
\nu(x,\delta)=\frac{\hat\varphi_\delta(x)-\hat\varphi_r(x)}{\log\delta-\log r}
\end{equation}
is non-decreasing in $\log\delta$. The limit $\lim_{\delta\to 0}\nu(x,\delta)$ is called the Lelong number of $\varphi$ at $x$. Using the convexity,
\begin{equation}
\frac{\hat\varphi_\delta(x)-\hat\varphi_{\delta/2}(x)}{\log\delta-\log (\delta/2)} < \nu(x,\delta).
\end{equation}
So
\begin{equation}
|\hat\varphi_\delta(x)-\hat\varphi_{\delta/2}(x)|< \log 2 \cdot \nu(x,\delta).
\end{equation}
If we have two coordinates $U_i$, $U_j$, then
\begin{equation}
B^i(x,C^{-1}\delta)<B^j(x,\delta)<B^i(x,C\delta)
\end{equation}
So $\hat\varphi_\delta(x)$ are close to each other if the Lelong number is small. Next, we need to compare the smoothing $\varphi_\delta(x)$ with $\hat\varphi_\delta(x)$. We define
\begin{equation}
\tilde\varphi_\delta(x)=\frac{\int_{\partial B_\delta(x)}\varphi}{Vol(\partial B_\delta(x))}.
\end{equation}
Then 
\begin{equation}
\varphi_\delta(x)=\frac{1}{C}\int_0^\delta r^{2n-1}\tilde\varphi_r(x)\rho_\delta(r)dr.
\end{equation}
We use the Poisson kernel to get
\begin{equation}
0\le \hat\varphi_\delta(x)- \tilde\varphi_\delta(x) \le \frac{3^{2n-1}}{2^{2n-2}}(\hat\varphi_\delta(x)-\hat\varphi_{\delta/2}(x)) \le C\nu(x,\delta).
\end{equation}
Then
\begin{equation}
\begin{split}
0 &\le \hat\varphi_\delta(x)- \varphi_\delta(x) \\
&= \frac{1}{C}\int_0^\delta r^{2n-1}(\hat\varphi_\delta(x)-\tilde\varphi_r(x))\rho_\delta(r)dr\\
&\le C\nu(x,\delta)+\frac{1}{C}\int_0^\delta r^{2n-1}(\hat\varphi_\delta(x)-\hat\varphi_r(x))\rho_\delta(r)dr \\
&\le C\nu(x,\delta).
\end{split}
\end{equation}

\bibliographystyle{amsalpha}

\bibliography{Hodge}

 \end{document}